\newcommand{\qedJIAM}{\qed}
 \renewcommand{\qedJIAM}{}
\newcommand{\finboxARX}{\finbox}
\numberwithin{equation}{section}
\newtheorem{theorem}{Theorem}[section]
\newtheorem{proposition}{Proposition}[section]
\newtheorem{lemma}{Lemma}[section]
\newtheorem{remark}{Remark}[section]
\newtheorem{example}{Example}[section]
\newcommand{\OMIT}[1]{{\bf [OMIT:} #1 \ {\bf --- end OMIT] }}  
   \renewcommand{\OMIT}[1]{}            
\newcommand{\RR}{{\mathbb{R}}}
\newcommand{\ZZ}{{\mathbb{Z}}}
\newcommand{\vecone}{{\bf 1}}
\newcommand{\dom}{{\rm dom\,}}
\newcommand{\suppp}{{\rm supp}\sp{+}}
\newcommand{\suppm}{{\rm supp}\sp{-}}
\newcommand{\unitvec}[1]{e\sp{#1}}
\newcommand{\subgR}{\partial}
\newcommand{\subgZ}{\partial}
\newcommand{\argmin}{\arg \min}
\newcommand{\finbox}{\hspace*{\fill}$\rule{0.2cm}{0.2cm}$}
\newcommand{\todaye}{\the\year/\the\month/\the\day}
\newcommand{\Lnat}{{L$^{\natural}$}}
\newcommand{\Mnat}{{M$^{\natural}$}}
\newcommand{\LLnat}{{L$^{\natural}_{2}$}}
\newcommand{\MMnat}{{M$^{\natural}_{2}$}}
\newcommand{\LL}{{L$_{2}$}}
\newcommand{\MM}{{M$_{2}$}}
\def\calF{{\cal F}}
\newcommand{\Mvexb}{\mbox{\rm\bf (M-EXC)}}
\begin{document}

\title{Note on the Polyhedral Description of the Minkowski Sum of Two L-convex Sets%
}

\author{
Satoko Moriguchi%
\thanks{
Faculty of Economics and Business Administration,
Tokyo Metropolitan University, 
satoko5@tmu.ac.jp}
\ and 
Kazuo Murota%
\thanks{
The Institute of Statistical Mathematics,
and Faculty of Economics and Business Administration,
Tokyo Metropolitan University, 
murota@tmu.ac.jp}
}

\date{October 2021/November 2021/March 2022}

\maketitle

\begin{abstract}
{\rm L}-convex sets are one of the most fundamental concepts 
in discrete convex analysis.
Furthermore, the Minkowski sum of two L-convex sets,
called {\rm L}$_{2}$-convex sets,
is an intriguing object 
that is closely related to polymatroid intersection.
This paper reveals the polyhedral description of 
an {\rm L}$_{2}$-convex set,
together with the observation that 
the convex hull of an {\rm L}$_{2}$-convex set is a box-TDI polyhedron.
Two different proofs are given for the polyhedral description.
The first is a structural short proof,
relying on the conjugacy theorem in discrete convex analysis,
and the second is a direct algebraic proof,
based on Fourier--Motzkin elimination.
The obtained results admit natural graph representations.
Implications of the obtained results in discrete convex analysis
are also discussed.
\end{abstract}

{\bf Keywords}:
Discrete convex analysis, Fourier--Motzkin elimination,
{\rm L}-convex set, {\rm L}$_{2}$-convex set, Minkowski sum.





\section{Introduction}
\label{SCintro}

In discrete convex analysis (DCA),
{\rm L}-convex functions form one of the  most fundamental classes of discrete convex functions
\cite{Mdca98,Mdcasiam,Mbonn09}.
The concept of {\rm L}-convex functions, as well as 
their variant called \Lnat-convex functions,%
\footnote{
 ``L''  stands for ``Lattice'' and ``\Lnat'' should be read ``ell natural.'' 
}  
sheds new light on algorithms in combinatorial optimization.
For example, Dijkstra's algorithm for shortest paths 
can be viewed as an instance of
L-convex function minimization \cite{MS14dijk}.
{\rm L}-convex functions have applications in several other fields including
computer vision \cite{Shi17L},
operations research (inventory theory, scheduling, etc.)
\cite{Che17,CL21dca,SCB14},
and economics and auction theory \cite{Mdcaeco16,Shi17L}.
Furthermore, the infimal convolution of two L-convex functions,
called {\rm L}$_{2}$-convex functions,
are the most intriguing objects 
in the duality theory in discrete convex analysis \cite[Chapter 8]{Mdcasiam}.
{\rm L}$_{2}$-convex functions are known to coincide with the conjugate
of {\rm M}$_{2}$-convex functions, the latter corresponding
to polymatroid intersection investigated in depth in combinatorial optimization
\cite{Sch03}. 

Concepts of discrete convex sets
are even more fundamental than those of discrete convex functions,
but at the same time, 
capture the essential properties of the corresponding discrete convex functions.
For example, the set of minimizers of an {\rm L}-convex function
is an {\rm L}-convex set.
Moreover, a function is {\rm L}-convex 
if and only if the set of minimizers of 
the function modified by an arbitrary linear function 
is always {\rm L}-convex
\cite[Section 7.5]{Mdcasiam}.
The set version of {\rm L}$_{2}$-convexity
is defined as the Minkowski sum (vector addition) of two L-convex sets.

The objective of this paper is to investigate 
the polyhedral description of \LL-convex sets.
That is, we aim at obtaining a system of inequalities
whose solution set coincides with the convex hull of a given \LL-convex set.
Such polyhedral description forms the basis 
of a standard approach in combinatorial optimization,
called polyhedral combinatorics \cite{Sch86,Sch03}.
Polyhedral descriptions are known for other kinds of discrete convex sets,
including {\rm L}-convex, \Lnat-convex,
{\rm M}-convex, and \Mnat-convex sets.
Polyhedral descriptions are also known for
\MM-convex and \MMnat-convex sets,
which correspond to polymatroid intersection.
In addition, the polyhedral description of multimodular sets 
has recently be obtained in \cite{MM21inclinter}.
It is worth noting that integrally convex sets 
\cite[Section 3.4]{Mdcasiam}
do not seem to admit a polyhedral characterization because
every set consisting of $\{ 0, 1\}$-vectors
is an integrally convex set, which fact implies that 
every $\{ 0, 1\}$-polytope is the convex hull of 
an integrally convex set.

In this paper we obtain polyhedral descriptions of an \LL-convex set,
together with the observation that 
the convex hull of an \LL-convex set is a box-TDI polyhedron.
Two versions of the polyhedral description of an \LL-convex set
are given,
Theorem~\ref{THl2polydesc} 
and 
Theorem~\ref{THl2polydescCyc}.
The former is a basic form, while the 
latter is a refinement 
with reference 
to a graph representation depending on the constituent {\rm L}-convex sets.
Although the basic form 
follows from its refinement in Theorem~\ref{THl2polydescCyc},
we give an independent short proof for 
Theorem~\ref{THl2polydesc}
relying on structural results in discrete convex analysis
such as the conjugacy theorem and \MM-optimality criterion.
Two different proofs are given to the refined form 
in Theorem~\ref{THl2polydescCyc}.
The first is a structural proof,
which is similar in vein to the proof of
Theorem~\ref{THl2polydesc}
but uses more detailed versions of 
the conjugacy theorem and \MM-optimality criterion.
The second is a direct algebraic proof,
which is based on Fourier--Motzkin elimination
applied to the combined system of inequalities
for the constituent {\rm L}-convex sets
and does not use any results from discrete convex analysis.
The obtained result (already Theorem~\ref{THl2polydesc})
has several implications in discrete convex analysis,
including an alternative proof
of the fundamental fact that a set of integer vectors is a box (interval)
if and only if 
it is both \LLnat-convex and \MMnat-convex.
The logical structure of the paper may be summarized by the diagram in Fig~\ref{FGlogic}. 

\begin{figure}
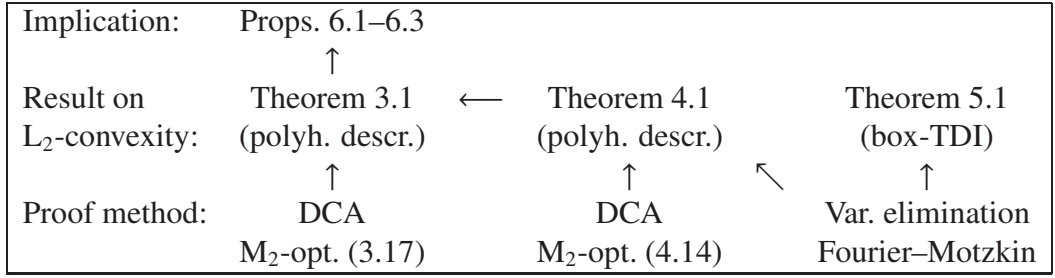

\centering 
\begin{tabular}{|lccccc|}
\hline
Implication: &  Props.~\ref{PRl2lnatB}--\ref{PRlnat2mnat2B} &  & & &
\\
 & $\uparrow$ & & & &
\\
Result on & Theorem~\ref{THl2polydesc}  & $\longleftarrow$  
 &  Theorem~\ref{THl2polydescCyc}  &    &   Theorem~\ref{THl2boxTDI}  
\\
\LL-convexity: & (polyh. descr.)  & 
 &  (polyh. descr.)  &    &   (box-TDI) 
\\
  & $\uparrow$ & & $\uparrow$ & $\nwarrow$ & $\uparrow$ 
\\ 
Proof method: & DCA & & DCA & & Var.~elimination
\\ 
 & \MM-opt.~\eqref{M2optcr1} & & \MM-opt.~\eqref{M2optCycle} & & Fourier--Motzkin
\\ \hline
\end{tabular}
\caption{Logical structure of this paper}
\label{FGlogic}
\end{figure}

This paper is organized as follows.
Section~\ref{SCprelim} recapitulates basic facts from discrete convex analysis,
focusing on L-convex and \LL-convex sets.
The main results are given in Sections \ref{SCpolydesc} and \ref{SCpolydescG}
with illustrative examples and structural proofs.
The proof by the Fourier--Motzkin elimination
is given in Section~\ref{SCelimappr}.
Applications of the obtained results 
in discrete convex analysis are shown in Section~\ref{SCimpli}. 
Finally, Section~\ref{SCconcl} concludes the paper with 
a summary of our present knowledge about the polyhedral description 
of discrete convex sets,
and Appendix gives definitions from discrete convex analysis.


\section{Preliminaries}
\label{SCprelim}

Basic facts about L-convex and \LL-convex sets are 
introduced in this section.

\subsection{Basic notation}
\label{SCnotat}

Let $N = \{ 1,2,\ldots, n  \}$.
For a vector $x = (x_{1}, x_{2}, \ldots, x_{n})$
and a subset $I$ of $N$,
we use notation $x(I) = \sum_{i \in I} x_{i}$.
The inner product of two vectors $x$ and $y$ is denoted by
$\langle x,y \rangle$.
For a subset $I$ of $N$,
we denote by $\unitvec{I}$ the characteristic vector of $I$;
the $i$th component of $\unitvec{I}$ is equal to 1 or 0 according 
to whether $i \in I$ or not.
We use a short-hand notation $\unitvec{i}$ for $\unitvec{ \{ i \} }$,
which is the $i$th unit vector.

For vectors 
$a \in (\RR \cup \{ -\infty \})\sp{n}$ and 
$b \in (\RR \cup \{ +\infty \})\sp{n}$ 
with $a \leq b$,
the box (rectangle, interval)
between $a$ and $b$ is denoted by $[a,b]_{\RR}$,
i.e.,
\begin{equation*}  
[a,b]_{\RR} = \{ x \in \RR\sp{n} \mid a_{i} \leq x_{i} \leq b_{i} \ (i=1,2,\ldots,n)  \}.
\end{equation*}
For integer vectors 
$a \in (\ZZ \cup \{ -\infty \})\sp{n}$ and 
$b \in (\ZZ \cup \{ +\infty \})\sp{n}$ 
with $a \leq b$,
the box of integers 
between $a$ and $b$ 
means $[a,b]_{\RR} \cap \ZZ\sp{n}$.
The convex hull of a set $S$ $(\subseteq \ZZ\sp{n})$ 
is denoted by $\overline{S}$.

For two vectors $x, y \in \RR\sp{n}$,
the vectors of componentwise maximum and minimum of $x$ and $y$
are denoted, respectively, 
by $x \vee y$ and $x \wedge y$,
i.e.,
\begin{equation*}   
  (x \vee y)_{i} = \max(x_{i}, y_{i}),
\quad
  (x \wedge y)_{i} = \min(x_{i}, y_{i})
\qquad (i =1,2,\ldots, n).
\end{equation*}
The vector with all components equal to 1 is denoted by $\vecone$,
that is, 
$\vecone =(1,1,\ldots, 1) = \unitvec{N}$.

\subsection{L-convex sets}
\label{SCprelimL}

A nonempty set $S$ $(\subseteq \ZZ\sp{n})$ is called {\em L-convex} if 
it satisfies the following two conditions:
\begin{align} 
 x, y \in S
& \ \Longrightarrow \
 x \vee y, \ x \wedge y \in S ,
\label{submsetL}
\\ 
 x  \in S, \mu \in \ZZ
& \ \Longrightarrow \
 x + \mu {\bf 1} \in S .
\label{invarone}
\end{align}
The first condition \eqref{submsetL} means that $S$ forms a sublattice of $\ZZ\sp{n}$.
A polyhedron $P$ $(\subseteq \RR\sp{n})$ is called {\em L-convex} 
if 
\begin{align} 
 x, y \in P
& \ \Longrightarrow \
 x \vee y, \ x \wedge y \in P ,
\label{submsetLP}
\\ 
 x  \in P, \mu \in \RR
& \ \Longrightarrow \
 x + \mu {\bf 1} \in P .
\label{invaroneP}
\end{align}
The reader is referred to 
\cite[Proposition 2.5]{Msurvop19}  
for characterizations of L-convex sets.
The following polyhedral description of an L-convex set is known
\cite[Section 5.3]{Mdcasiam}.

\begin{theorem} \label{THlpolydesc}
\quad

\noindent
{\rm (1)}
A set $S \subseteq \ZZ\sp{n}$ is L-convex if and only if 
it can be represented as 
\begin{equation}  \label{SfromGamma}
S = \{ x \in \ZZ\sp{n} \mid x_{j} - x_{i} \leq \gamma_{ij} \ \ (i,j \in N) \}
\end{equation}
for some $\gamma_{ij} \in \ZZ \cup \{ +\infty \}$ $(i,j \in N )$ 
satisfying 
\begin{equation}  \label{gammaii}
\gamma_{ii} = 0
\qquad (i \in N)
\end{equation}
and 
the triangle inequality: 
\begin{equation}  \label{gammatriangle}
\gamma_{ij} + \gamma_{jk} \geq \gamma_{ik}
\qquad (i,j,k \in N).
\end{equation}
Such $\gamma_{ij}$ is determined from $S$ by%
\begin{equation}  \label{gammaFromS}
\gamma_{ij} =   \sup \{ x_{j} - x_{i} \mid x \in S \} 
\qquad (i,j \in N).
\end{equation}

\noindent
{\rm (2)}
A polyhedron $P \subseteq \RR\sp{n}$ is L-convex if and only if 
it can be represented as 
\begin{equation}  \label{PfromGamma}
P = \{ x \in \RR\sp{n} \mid x_{j} - x_{i} \leq \gamma_{ij} \ \ (i,j \in N) \}
\end{equation}
for some $\gamma_{ij} \in \RR \cup \{ +\infty \}$
$(i,j \in N )$
satisfying \eqref{gammaii} and \eqref{gammatriangle}.
Such $\gamma_{ij}$ is determined from $P$ by
\begin{equation}  \label{gammaFromP}
\gamma_{ij} =  \sup  \{ x_{j} - x_{i} \mid x \in P \} 
\qquad (i,j \in N).
\end{equation}
\finboxARX
\end{theorem}

\begin{remark} \rm \label{RMpolydescL}
Here are additional remarks about the polyhedral descriptions 
in Theorem~\ref{THlpolydesc}.

\begin{itemize}
\item
The correspondence between $S$ and integer-valued $\gamma$ with
\eqref{gammaii} and
\eqref{gammatriangle}
is bijective (one-to-one and onto)
through \eqref{SfromGamma} and \eqref{gammaFromS}.

\item
The correspondence between $P$ and real-valued $\gamma$ with 
\eqref{gammaii} and
\eqref{gammatriangle}
is bijective (one-to-one and onto)
through \eqref{PfromGamma} and \eqref{gammaFromP}.

\item
For any $\gamma_{ij} \in \RR \cup \{ +\infty \}$ $(i,j \in N )$
(independent of the triangle inequality),
$P$ in \eqref{PfromGamma}
is an L-convex polyhedron if $P \ne \emptyset$.
We have $P \ne \emptyset$ if and only if there exists no negative cycle 
with respect to $\gamma_{ij}$,
where a negative cycle means a set of indices $i_{1}, i_{2}, \ldots, i_{m}$ 
such that
$\gamma_{i_{1}i_{2}} +\gamma_{i_{2}i_{3}}+ \cdots
 +\gamma_{i_{m-1}i_{m}} +\gamma_{i_{m}i_{1}} < 0$.

\item
The convex hull of an L-convex set $S$ is an L-convex polyhedron
described by 
integer-valued $\gamma_{ij} \in \ZZ \cup \{ +\infty \}$ $(i,j \in N )$.

\item
For any integer-valued $\gamma_{ij} \in \ZZ \cup \{ +\infty \}$ $(i,j \in N )$,
$P$ in \eqref{PfromGamma} (if $P \ne \emptyset$)  is an integer polyhedron
and $S = P \cap \ZZ\sp{n}$ is an L-convex set with $\overline{S} = P$.
\finbox
\end{itemize}
\end{remark}

The intersection of an L-convex set with a coordinate hyperplane
is called an {\em \Lnat-convex set}.
That is,
a nonempty set $S \subseteq \ZZ\sp{n}$ is called \Lnat-convex if
\begin{equation}  \label{LnatfromL}
S =  \{ x \mid (x,0) \in T  \}
\end{equation}
for some L-convex set $T \subseteq  \ZZ\sp{n+1}$,
where $x \in \ZZ\sp{n}$ and $(x,0) \in \ZZ\sp{n+1}$ in \eqref{LnatfromL}.
For an \Lnat-convex set $S$ in $\ZZ\sp{n}$, the set 
\begin{equation}  \label{LfromLnat}
T =  \{ (x + \alpha \vecone,\alpha) \mid x \in S, \alpha \in \ZZ  \}
\end{equation}
is an L-convex in $\ZZ\sp{n+1}$. 
Thus the concepts of L-convex sets and \Lnat-convex sets 
are essentially equivalent.
Moreover, an L-convex set is \Lnat-convex.

Similarly, 
the intersection of an L-convex polyhedron with a coordinate hyperplane
is called an \Lnat-convex polyhedron.
That is,
a polyhedron $P \subseteq \RR\sp{n}$ is called \Lnat-convex if
\begin{equation}  \label{LnatfromLP}
P =  \{ x \mid (x,0) \in Q  \}
\end{equation}
for some L-convex polyhedron $Q \subseteq  \RR\sp{n+1}$.
For an \Lnat-convex polyhedron $P$ in $\RR\sp{n}$, the set 
\begin{equation}  \label{LfromLnatP}
Q =  \{ (x  + \alpha \vecone,\alpha) \mid x \in P, \alpha \in \RR  \}
\end{equation}
is an L-convex polyhedron in $\RR\sp{n+1}$. 
An L-convex polyhedron is \Lnat-convex.

The polyhedral description
of an \Lnat-convex set (resp., polyhedron)
can be obtained from Theorem~\ref{THlpolydesc}
with the aid of the relation 
\eqref{LnatfromL} (resp., \eqref{LnatfromLP}).

\begin{theorem} \label{THlnatpolydesc}
\quad

\noindent
{\rm (1)}
A set $S \subseteq \ZZ\sp{n}$ is \Lnat-convex if and only if 
it can be represented as 
\begin{equation}  \label{SnatfromGamma}
S = \{ x \in \ZZ\sp{n} \mid 
\alpha_{i} \leq  x_{i} \leq \beta_{i} \ \ (i\in N),  \ 
x_{j} - x_{i} \leq \gamma_{ij} \ \ (i,j \in N) \}
\end{equation}
for some $\alpha_{i} \in \ZZ \cup \{ -\infty \}$,
$\beta_{i} \in \ZZ \cup \{ +\infty \}$,
and $\gamma_{ij} \in \ZZ \cup \{ +\infty \}$
$(i,j \in N )$
with $\gamma_{ii} =0$ $(i \in N )$
such that 
$\tilde \gamma_{ij}$ defined
for $i,j \in N \cup \{ 0 \}$
by 
\begin{equation}  \label{tildeGamma}
\tilde \gamma_{00} =0, \qquad
\tilde \gamma_{ij} = \gamma_{ij}, \quad
\tilde \gamma_{i0} = -\alpha_{i}, \quad
\tilde \gamma_{0j} = \beta_{j}  \qquad
(i,j \in N)
\end{equation}
satisfies the triangle inequality:
\begin{equation}  \label{gammatriangleLnat}
\tilde \gamma_{ij} + \tilde \gamma_{jk} \geq \tilde \gamma_{ik}
\qquad (i,j,k \in N \cup \{ 0 \}).
\end{equation}
Such $\alpha_{i}$, $\beta_{i}$, $\gamma_{ij}$ are determined from $S$ by
\begin{align*} 
&\alpha_{i} =  \inf \{ x_{i} \mid x \in S \}, \quad
\beta_{i} = \sup \{ x_{i} \mid x \in S \}
\qquad (i \in N),
\\ &
\gamma_{ij} =  \sup \{ x_{j} - x_{i} \mid x \in S \} 
\qquad (i,j \in N).
\end{align*}

\noindent
{\rm (2)}
A polyhedron $P \subseteq \RR\sp{n}$ is \Lnat-convex if and only if 
it can be represented as 
\begin{equation*}  
P = \{ x \in \RR\sp{n} \mid 
\alpha_{i} \leq  x_{i} \leq \beta_{i} \ \ (i\in N),  \ 
x_{j} - x_{i} \leq \gamma_{ij} \ \ (i,j \in N) \}
\end{equation*}
for some $\alpha_{i} \in \RR \cup \{ -\infty \}$,
$\beta_{i} \in \RR \cup \{ +\infty \}$,
and $\gamma_{ij} \in \RR \cup \{ +\infty \}$
$(i,j \in N )$
with $\gamma_{ii} =0$ $(i \in N )$
such that 
$\tilde \gamma_{ij}$ defined
by \eqref{tildeGamma}
satisfies the triangle inequality \eqref{gammatriangleLnat}.
Such $\alpha_{i}$, $\beta_{i}$, $\gamma_{ij}$ are determined from $P$ by
\begin{align*} 
&\alpha_{i} =  \inf \{ x_{i} \mid x \in P \}, \quad
\beta_{i} =  \sup \{ x_{i} \mid x \in P \}
\qquad (i \in N),
\\ &
\gamma_{ij} =   \sup  \{ x_{j} - x_{i} \mid x \in P \} 
\qquad (i,j \in N).
\end{align*}
\finboxARX
\end{theorem}

The statements made in Remark~\ref{RMpolydescL}
can be adapted to \Lnat-convexity.
The reader is referred to 
\cite[Section 5.5]{Mdcasiam} 
and \cite[Proposition 2.3]{Msurvop19}  
for characterizations of \Lnat-convex sets.

\subsection{\LL-convex sets}
\label{SCprelimL2}

A nonempty set $S$ $\subseteq \ZZ\sp{n}$ is called 
{\em \LL-convex}  
(resp., {\em \LLnat-convex})  
if it can be represented as the Minkowski sum
(vector addition)
of two L-convex (resp., \Lnat-convex) sets
\cite[Section 5.5]{Mdcasiam}.
That is,
\begin{equation*} 
 S = \{ y + z \mid y \in S_{1} ,  z \in S_{2} \},
\end{equation*}
where
$S_{1}$ and $S_{2}$ are L-convex (resp., \Lnat-convex) sets.
Similarly,
a polyhedron $P$ $\subseteq \RR\sp{n}$
is called 
{\em \LL-convex}  
(resp., {\em \LLnat-convex})  
if it is the Minkowski sum of 
two L-convex (resp., \Lnat-convex) polyhedra.
An L-convex set is an \LL-convex set, but the converse is not true.
Similarly,
an \Lnat-convex set is \LLnat-convex, but the converse is not true.
An \LL-convex set (resp., polyhedron) is \LLnat-convex,
because L-convex sets (resp., polyhedra) are \Lnat-convex.

It is a basic fact
that an \LLnat-convex set (resp., polyhedron) 
is precisely the intersection of an L$_{2}$-convex set (resp., polyhedron) 
with a coordinate hyperplane,%
\footnote{
This fact is stated in \cite[p.129]{Mdcasiam} without a proof.
} 
which is proved in Proposition~\ref{PRfromL2toL2nat} below.
We first note a simple fact.

\begin{proposition} \label{PRminkowrestr}
\quad

\noindent
{\rm (1)}
For any $T \subseteq \ZZ\sp{n+1}$,
define $\varphi(T) := \{ x \mid (x,0) \in T  \}$.
If $T_{1}$ and $T_{2}$ have the property \eqref{invarone},
then  $\varphi(T_{1} + T_{2}) = \varphi(T_{1}) + \varphi(T_{2})$.

\noindent
{\rm (2)}
For any $Q \subseteq \RR\sp{n+1}$,
define $\varphi(Q) := \{ x \mid (x,0) \in Q  \}$.
If $Q_{1}$ and $Q_{2}$ have the property \eqref{invaroneP},
then  $\varphi(Q_{1} + Q_{2}) = \varphi(Q_{1}) + \varphi(Q_{2})$. 
\end{proposition}
\begin{proof}
We prove (1) only, while (2) can be proved in the same way. 
For any $T_{1}, T_{2} \subseteq  \ZZ\sp{n+1}$,
we have
\begin{align*}
\varphi(T_{1} + T_{2}) &= \{ x \mid (x,0) \in T_{1} + T_{2} \}
\nonumber \\
  &= \{ x \mid (x,0)= (y,\alpha) + (z,\beta), 
   \ (y,\alpha) \in T_{1}, \  (z,\beta) \in T_{2} \}
\nonumber \\
  &= \{ y+z \mid (y,\alpha) \in T_{1},  \ (z,-\alpha) \in T_{2} \},
\end{align*}
whereas
\begin{equation*}  
\varphi(T_{1}) + \varphi(T_{2}) = \{ y+z  \mid (y,0) \in T_{1}, (z,0) \in T_{2} \}.
\end{equation*}
Therefore,
\begin{equation*}  
 \varphi(T_{1}) + \varphi(T_{2})  \subseteq \varphi(T_{1} + T_{2}).
\end{equation*}
The reverse inclusion ($\supseteq $) holds under \eqref{invarone}.
Take any $x \in \varphi(T_{1} + T_{2})$.
Then there exist
$(y,\alpha) \in T_{1}$ and $(z,-\alpha) \in T_{2}$
satisfying $x = y+z$.
By \eqref{invarone},
we have
\[
 (y,\alpha) - \alpha (\vecone,1) = (y - \alpha \vecone, 0) \in T_{1},
\quad
 (z,-\alpha) + \alpha (\vecone,1) = (z + \alpha \vecone, 0) \in T_{2},
\]
from which follows that 
$x = y+z = (y - \alpha \vecone) + (z + \alpha \vecone) 
\in \varphi(T_{1}) + \varphi(T_{2})$.
\qedJIAM
\end{proof}

\begin{proposition} \label{PRfromL2toL2nat}
\quad

\noindent
{\rm (1)}
For each \LL-convex set $T \subseteq  \ZZ\sp{n+1}$,
$S :=  \{ x \in \ZZ\sp{n} \mid (x,0) \in T  \}$
is an \LLnat-convex set, and every \LLnat-convex set $S \subseteq \ZZ\sp{n}$ 
arises in this way.

\noindent
{\rm (2)}
For each \LL-convex polyhedron $Q \subseteq  \RR\sp{n+1}$,
$P :=  \{ x \in \RR\sp{n} \mid (x,0) \in Q  \}$
is an \LLnat-convex polyhedron, and every \LLnat-convex polyhedron $P \subseteq \RR\sp{n}$ 
arises in this way.
\end{proposition}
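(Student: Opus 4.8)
The plan is to establish both directions of each statement by tracking how the Minkowski sum interacts with the lifting operations \eqref{LnatfromL}--\eqref{LfromLnat} (and their polyhedral analogues \eqref{LnatfromLP}--\eqref{LfromLnatP}). I will work out part (1) in detail; part (2) follows by the same argument with $\ZZ$ replaced by $\RR$ and ``L-convex set'' replaced by ``L-convex polyhedron'' throughout, using the polyhedral versions of the lifting relations already recorded in the excerpt.

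For the first direction of (1), suppose $T \subseteq \ZZ^{n+1}$ is \LL-convex, so $T = T_1 + T_2$ with $T_1, T_2$ L-convex. Then $S = \{ x \in \ZZ^{n} \mid (x,0) \in T \}$ consists of those $x$ for which there exist $u^{1} \in T_1$, $u^{2} \in T_2$ with $u^{1} + u^{2} = (x,0)$; writing the last coordinate of $u^{k}$ as $\alpha_k$, this forces $\alpha_1 + \alpha_2 = 0$. The key step is to use the $\vecone$-translation invariance \eqref{invarone} of each $T_k$: replacing $u^{1}$ by $u^{1} - \alpha_1 \vecone_{n+1}$ and $u^{2}$ by $u^{2} + \alpha_1 \vecone_{n+1}$ (note $-\alpha_2 = \alpha_1$) we may assume both last coordinates are $0$, i.e. $u^{k} = (x^{k}, 0)$ with $x^{k} \in \{ x \mid (x,0) \in T_k \} =: S_k$. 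Since $S_k$ is \Lnat-convex by definition \eqref{LnatfromL}, and $x = x^{1} + x^{2}$, we get $S \subseteq S_1 + S_2$; the reverse inclusion is immediate by adding a $0$ last coordinate. Hence $S = S_1 + S_2$ is \LLnat-convex.

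For the converse, let $S \subseteq \ZZ^{n}$ be \LLnat-convex, say $S = S_1 + S_2$ with $S_1, S_2$ \Lnat-convex. Lift each via \eqref{LfromLnat} to an L-convex set $T_k = \{ (x + \alpha \vecone, \alpha) \mid x \in S_k, \alpha \in \ZZ \} \subseteq \ZZ^{n+1}$, and set $T := T_1 + T_2$, which is \LL-convex. I then need to check that $\{ x \in \ZZ^{n} \mid (x,0) \in T \}$ equals $S$. For ``$\supseteq$'': if $x = x^{1} + x^{2}$ with $x^{k} \in S_k$, then $(x^{k}, 0) \in T_k$, so $(x,0) \in T$. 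For ``$\subseteq$'': if $(x,0) \in T$, write $(x,0) = (x^{1} + \alpha_1 \vecone, \alpha_1) + (x^{2} + \alpha_2 \vecone, \alpha_2)$ with $x^{k} \in S_k$; the last coordinate gives $\alpha_1 + \alpha_2 = 0$, and then the first $n$ coordinates give $x = x^{1} + x^{2} + (\alpha_1 + \alpha_2)\vecone = x^{1} + x^{2} \in S_1 + S_2 = S$. This completes (1).

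The main obstacle is purely bookkeeping: making sure the cancellation $\alpha_1 + \alpha_2 = 0$ is exploited symmetrically so that after the $\vecone$-translation one lands genuinely inside each $S_k$ (the subtlety being that the translation in $T_1$ must be compensated in $T_2$, which is legitimate precisely because each $T_k$ is individually translation-invariant). Everything else is a routine matching of Minkowski-sum representations with the defining relations \eqref{LnatfromL} and \eqref{LfromLnat}. For part (2) the same four inclusions go through verbatim with $\mu \in \RR$ in \eqref{invaroneP} playing the role of $\mu \in \ZZ$, and with \eqref{LnatfromLP}, \eqref{LfromLnatP} in place of \eqref{LnatfromL}, \eqref{LfromLnat}; no new idea is needed.
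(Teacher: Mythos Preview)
Your proof is correct and follows essentially the same approach as the paper: both arguments reduce the identity $\{x \mid (x,0)\in T_{1}+T_{2}\}=S_{1}+S_{2}$ to the $\vecone$-translation invariance \eqref{invarone} of the L-convex summands, normalizing the last coordinates of the two pieces to zero. Your write-up is slightly more explicit about the converse direction (constructing $T$ from a given \LLnat-convex $S$ via the lifts \eqref{LfromLnat}), which the paper leaves implicit, but no new idea is involved.
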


\begin{proof}
We prove (1) only, while (2) can be proved in the same way. 
Let $T$ be an \LL-convex set, which is represented as 
$T=T_{1}+T_{2}$ with two L-convex sets $T_{1}$ and $T_{2}$.
By the property \eqref{invarone} of L-convexity,
we have
$\varphi(T_{1} + T_{2}) = \varphi(T_{1}) + \varphi(T_{2})$
in the notation of Proposition~\ref{PRminkowrestr}.
This shows that $S=\varphi(T)=\varphi(T_{1} + T_{2})= \varphi(T_{1}) + \varphi(T_{2})$ 
is \LLnat-convex.
Conversely, let $S$ be an \LLnat-convex set.
By definition, $S$ can be represented as
$S=S_{1}+S_{2}$ with two \Lnat-convex sets $S_{1}$ and $S_{2}$.
Then $S_{1}=\varphi(T_{1})$  and $S_{2}=\varphi(T_{2})$  
for some L-convex sets $T_{1}$ and $T_{2}$.
Let $T= T_{1} + T_{2}$, which is \LL-convex.
It then follows from Proposition~\ref{PRminkowrestr} that 
$S= \varphi(T_{1}) + \varphi(T_{2}) = \varphi(T_{1} + T_{2}) = \varphi(T)$.
\qedJIAM
\end{proof}

\section{Polyhedral description of \LL-convex sets}
\label{SCpolydesc}

\subsection{Preliminary considerations}
\label{SCpolydescPre}

Let 
$S = S_{1} + S_{2} = \{ x \mid x = y + z, \  y \in S_{1}, z \in S_{2} \}$
be an \LL-convex set,
where $S_{1}$ and $S_{2}$ are L-convex sets.
By Theorem~\ref{THlpolydesc}, we can represent
$S_{1}$ and $S_{2}$ as
\begin{align} 
S_{1} &= \{ y \in \ZZ\sp{n} \mid 
  y_{j} - y_{i} \leq \gamma_{ij}\sp{(1)} \ \ ((i,j) \in E_{1}) \},
\label{S1descPre} 
\\
S_{2} &= \{ z \in \ZZ\sp{n} \mid 
  z_{j} - z_{i} \leq \gamma_{ij}\sp{(2)} \ \ ((i,j) \in E_{2}) \},
\label{S2descPre} 
\end{align}
where  $E_{1}, E_{2} \subseteq (N \times N) \setminus \{ (i,i) \mid i \in N \}$, 
$\gamma_{ij}\sp{(1)} \in \ZZ$ (finite-valued)
for all $(i,j) \in E_{1}$, and 
$\gamma_{ij}\sp{(2)} \in \ZZ$ for all $(i,j) \in E_{2}$.
We do not impose triangle inequality on 
$\gamma\sp{(k)} = ( \gamma_{ij}\sp{(k)} \mid (i,j) \in E_{k})$,
which is allowed  by Remark~\ref{RMpolydescL}.
The objective of this preliminary section is to derive 
a plausible system of inequalities to describe $S$.

In \eqref{S1descPre} and \eqref{S2descPre} we have
\begin{align} 
&
y_{j} - y_{i} \leq \gamma_{ij}\sp{(1)} 
 \quad ((i,j) \in E_{1}),
\label{S1descPre2} 
\\ &
 z_{j} - z_{i} \leq \gamma_{ij}\sp{(2)} 
\quad ((i,j) \in E_{2}).
\label{S2descPre2} 
\end{align}
Suppose that there are indices
$(i_{1}, j_{1}, \ldots, i_{m}, j_{m})$
such that
$(i_{r},j_{r}) \in E_{1}$
and
$(i_{r+1},j_{r}) \in E_{2}$
for $r=1,2,\ldots, m$, where
$i_{m+1} = i_{1}$.
By adding inequalities 
\eqref{S1descPre2} 
for $(i,j)=(i_{r},j_{r})$
with $r=1,2,\ldots, m$
and 
\eqref{S2descPre2} 
for $(i,j)=(i_{r+1},j_{r})$
with $r=1,2,\ldots, m$,
we obtain
\[
 \sum_{r=1}\sp{m} (y_{j_{r}}+z_{j_{r}}) -  \sum_{r=1}\sp{m} (y_{i_{r}}+z_{i_{r}}) 
\leq  \sum_{r=1}\sp{m} \gamma_{i_{r}j_{r}}\sp{(1)}  
 +  \sum_{r=1}\sp{m} \gamma_{i_{r+1}j_{r}}\sp{(2)}  .
\]
Since $y+z = x$, it follows that
\begin{equation}  \label{L2SdescPre}
 x(\{ j_{1}, \ldots, j_{m} \}) - x(\{ i_{1}, \ldots, i_{m} \}) 
\leq 
 \sum_{r=1}\sp{m} \gamma_{i_{r}j_{r}}\sp{(1)}  
 +  \sum_{r=1}\sp{m} \gamma_{i_{r+1}j_{r}}\sp{(2)}  .
\end{equation}
For any
$(i_{1}, j_{1}, \ldots, i_{m}, j_{m})$,
\eqref{L2SdescPre}
is a valid inequality for $S$, which means that
every element $x$ of $S$ satisfies \eqref{L2SdescPre}.
By choosing any family $\mathcal{F}$ of such indices 
$(i_{1}, j_{1}, \ldots, i_{m}, j_{m})$,
we obtain a system of inequalities indexed by $\mathcal{F}$, for which
we have
\begin{equation}  \label{L2Svalid}
 S \subseteq \{ x \in \ZZ\sp{n} \mid \mbox{\eqref{L2SdescPre}  
for all $(i_{1}, j_{1}, \ldots, i_{m}, j_{m}) \in \mathcal{F}$} \}.
\end{equation}

The inequality in \eqref{L2SdescPre}
has a characteristic feature that 
each coefficient of the variable $x$ belongs to $\{ -1,0,+1 \}$
and there are as many ``$+1$'' as ``$-1$'' among the coefficients.
The main message of this paper is that 
an \LL-convex set can indeed be described by such inequalities
with a suitable $\mathcal{F}$.

\subsection{Theorems}
\label{SCpolydescThmS}

The following theorem gives a polyhedral description of an \LL-convex set
(or polyhedron).
The constants $\gamma_{IJ}$ in
\eqref{L2Sdesc} and \eqref{L2Pdesc} 
will be determined 
in Theorem~\ref{THl2polydescCyc} in Section~\ref{SCpolydescG}.

\begin{theorem} \label{THl2polydesc}
\quad

\noindent
{\rm (1)}
An \LL-convex set $S \subseteq \ZZ\sp{n}$ can be represented as 
\begin{equation}  \label{L2Sdesc}
S = \{ x \in \ZZ\sp{n} \mid x(J) - x(I) \leq \gamma_{IJ} \ 
\mbox{\rm for all $(I,J)$ with $|I| =|J|$, $I \cap J = \emptyset$} \}
\end{equation}
for some $\gamma_{IJ} \in \ZZ \cup \{ +\infty \}$
indexed by $(I,J)$.

\noindent
{\rm (2)}
An \LL-convex polyhedron $P \subseteq \RR\sp{n}$ can be represented as 
\begin{equation}  \label{L2Pdesc}
P = \{ x \in \RR\sp{n} \mid x(J) - x(I) \leq \gamma_{IJ} \
\mbox{\rm for all $(I,J)$ with $|I| =|J|$, $I \cap J = \emptyset$} \}
\end{equation}
for some $\gamma_{IJ} \in \RR \cup \{ +\infty \}$
indexed by $(I,J)$.
\end{theorem}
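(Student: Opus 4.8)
The plan is to prove both parts at once by Fourier--Motzkin elimination, as announced in the introduction; the integer case (1) and the polyhedral case (2) will differ only in whether one eliminates over $\ZZ$ or over $\RR$. First I would write the given set as $S = S_{1} + S_{2}$ with $S_{1},S_{2}$ L-convex and invoke Theorem~\ref{THlpolydesc}(1) to represent them by difference constraints, say $S_{1} = \{ y \mid y_{j}-y_{i} \le \alpha_{ij}\ (i,j\in N)\}$ and $S_{2} = \{ z \mid z_{j}-z_{i} \le \beta_{ij}\ (i,j\in N)\}$ with integer $\alpha_{ij},\beta_{ij}$, chosen canonically via \eqref{gammaFromS} so that $\alpha_{ii}=\beta_{ii}=0$ and every cycle-sum of the $\alpha$'s, and of the $\beta$'s, is nonnegative. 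Substituting $z = x-y$, a point $x$ lies in $S$ precisely when the system of difference constraints $y_{j}-y_{i} \le \delta_{ij}(x) := \min\{\alpha_{ij},\ \beta_{ji}+x_{j}-x_{i}\}$ $(i,j\in N)$ admits a solution $y$ (the $S_{2}$-constraint for the ordered pair $(j,i)$ reading $y_{j}-y_{i}\le\beta_{ji}+x_{j}-x_{i}$). Since the coefficient matrix of a difference system is totally unimodular and the right-hand sides $\delta_{ij}(x)$ are integral when $x$ is, this system has an integer solution iff it has a real one; hence the resulting description of $S$ is the same over $\ZZ$ and over $\RR$, which is what lets one deduce (2) --- and the integrality of $\overline{S}$ --- from the very same computation.

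Next I would carry out the elimination of $y_{1},\dots,y_{n}$. Fourier--Motzkin applied to a difference system is classical: each step composes $y_{k}-y_{i}\le\delta_{ik}$ and $y_{j}-y_{k}\le\delta_{kj}$ into $y_{j}-y_{i}\le\delta_{ik}+\delta_{kj}$, and iterating to exhaustion describes the feasible $x$'s by the conditions that every cycle-sum of the $\delta_{ij}(x)$ is nonnegative. The key simplification is that it suffices to retain only \emph{simple} cycles, since any negative closed walk contains a negative simple cycle. For a fixed simple cycle $C=(i_{0},i_{1},\dots,i_{m-1},i_{m}=i_{0})$ I would then expand $\delta_{i_{t}i_{t+1}}(x)=\min\{\alpha_{i_{t}i_{t+1}},\,\beta_{i_{t+1}i_{t}}+x_{i_{t+1}}-x_{i_{t}}\}$ and use the identity $\sum_{t}\min\{a_{t},b_{t}\}=\min_{E}\bigl(\sum_{t\in E}a_{t}+\sum_{t\notin E}b_{t}\bigr)$ over subsets $E$ of the edge set of $C$, so that the single inequality ``cycle-sum $\ge 0$'' for $C$ becomes equivalent to the family, indexed by $E$, of inequalities $\sum_{t\notin E}(x_{i_{t}}-x_{i_{t+1}}) \le \sum_{t\in E}\alpha_{i_{t}i_{t+1}} + \sum_{t\notin E}\beta_{i_{t+1}i_{t}}$.

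Finally I would identify the left-hand sides. For fixed $C$ and $E$, the edges outside $E$ break into maximal arcs along $C$, and telescoping inside each arc turns $\sum_{t\notin E}(x_{i_{t}}-x_{i_{t+1}})$ into $x(J)-x(I)$, where $J$ and $I$ collect, respectively, the initial and terminal vertices of those arcs. Because $C$ is simple, all these vertices are distinct, whence $I\cap J=\emptyset$ and $|I|=|J|$ (the common value being the number of arcs); the two degenerate choices $E=\emptyset$ and $E=\{0,\dots,m-1\}$ give only the vacuous inequalities $0\le\sum_{t}\beta_{i_{t+1}i_{t}}$ and $0\le\sum_{t}\alpha_{i_{t}i_{t+1}}$, which hold by the triangle inequality and may be dropped. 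Defining $\gamma_{IJ}$ to be the infimum of $\sum_{t\in E}\alpha_{i_{t}i_{t+1}}+\sum_{t\notin E}\beta_{i_{t+1}i_{t}}$ over all pairs $(C,E)$ producing this $(I,J)$ (and $+\infty$ if none does) then yields \eqref{L2Sdesc}, with $\gamma_{IJ}\in\ZZ\cup\{+\infty\}$ --- the infimum is bounded below and, over $\ZZ$, attained, because $S\neq\emptyset$ --- and the same argument over $\RR$ gives \eqref{L2Pdesc}; the explicit formula for $\gamma_{IJ}$ in Theorem~\ref{THl2polygamma} drops out of this computation. I expect the main obstacle to be exactly the reduction from arbitrary closed walks to simple cycles together with the bookkeeping in the telescoping step: it is the simplicity of $C$ that forces the coefficient vector of each surviving inequality to have the clean form $\unitvec{J}-\unitvec{I}$ rather than a general integral vector summing to zero, and one must verify that the discarded non-simple and degenerate inequalities are genuinely implied by the ones retained. (A shorter, less self-contained alternative would derive the same description from the conjugacy theorem of discrete convex analysis, using that the support function of a Minkowski sum is the sum of support functions.)
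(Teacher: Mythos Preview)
Your proof is correct and follows the same overall strategy as the paper's second proof (Fourier--Motzkin elimination after substituting $z=x-y$), but you execute the elimination more conceptually. Whereas the paper eliminates $y_{1},y_{2},\ldots$ one at a time and tracks the form of the surviving inequalities inductively (Section~\ref{SCproofL2ineqcal}), you recognize that the residual system in $y$ is a difference-constraint system with $x$-dependent right-hand sides and invoke the no-negative-cycle feasibility criterion directly, then expand the minimum over simple cycles. This is cleaner and avoids the case analysis the paper carries out; it also yields, essentially for free, the cycle-based description that the paper obtains separately as Theorem~\ref{THl2polydescCyc} via the conjugacy route (Remark~\ref{RMdescL2cyc}). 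Your use of total unimodularity to pass between the integer and real cases is likewise more self-contained than the paper's Remark~\ref{RMproofL2set}, which appeals to the hole-freeness $S=\overline{S}\cap\ZZ^{n}$ of \LL-convex sets. The paper's explicit elimination, on the other hand, makes the redundancy structure more visible and feeds into the box-TDI argument of Section~\ref{SCboxTDI}. Finally, the alternative you mention in your last sentence is exactly the paper's first proof (Section~\ref{SCproofconjuL2}).
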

\begin{proof}
The proof is given in Section~\ref{SCpolydescprf}.
\qedJIAM
\end{proof}

The following example shows that the number 
of variables in an inequality $x(J) - x(I) \leq \gamma_{IJ}$,
which is $|I|+|J|$,  is not bounded by a constant.

\begin{example} \rm \label{EXlongineq}
Let $n = 2m$ be an even integer, and consider 
the \LL-convex set $S = S_{1} + S_{2}$ defined by L-convex sets
\begin{align} 
 S_{1} &= \{ y \in \ZZ\sp{n}  \mid 
  y_{2i-1} \leq  y_{2i} \ \  (i=1,2,\ldots, m)  \},
\label{longineqS1}
\\
 S_{2} &= \{ z \in \ZZ\sp{n}  \mid 
  z_{2i+1} \leq  z_{2i} \ \  (i=1,2,\ldots, m) \},
\label{longineqS2}
\end{align}
where $z_{n+1} = z_{1}$.
The description of $S = S_{1} + S_{2}$ is given by
\begin{equation} \label{longineqS}
 S = \{ x \in \ZZ\sp{n}  \mid 
 (x_{1} + x_{3} + \cdots + x_{n-1}) - (x_{2} + x_{4} + \cdots + x_{n}) \leq 0   \},
\end{equation}
which is shown later in Example~\ref{EXlongineqG} in Section~\ref{SCthmrefined}.
All variables are involved in a single inequality.
\finbox
\end{example}

By Proposition \ref{PRfromL2toL2nat},
an \LLnat-convex set is nothing but the intersection of 
an L$_{2}$-convex set with a coordinate hyperplane.
Hence Theorem~\ref{THl2polydesc}
immediately implies the following theorem
for an \LLnat-convex set (or polyhedron).

\begin{theorem} \label{THlnat2polydesc}
\quad

\noindent
{\rm (1)}
An \LLnat-convex set $S \subseteq \ZZ\sp{n}$ can be represented as 
\begin{align} 
S =  \{ x \in \ZZ\sp{n} \mid & \
 x(J) - x(I) \leq \gamma_{IJ} \  
\mbox{\rm for all $(I,J)$} 
\nonumber \\ & \ 
\mbox{\rm with $|I|-|J| \in \{ -1,0,1 \}$, $I \cap J = \emptyset$} \}
 \label{Lnat2Sdesc}
\end{align}
for some $\gamma_{IJ} \in \ZZ \cup \{ +\infty \}$
indexed by $(I,J)$.

\noindent
{\rm (2)}
An \LLnat-convex polyhedron $P \subseteq \RR\sp{n}$ can be represented as 
\begin{align}  
P = \{ x \in \RR\sp{n} \mid & \
 x(J) - x(I) \leq \gamma_{IJ} \
\mbox{\rm for all $(I,J)$} 
\nonumber \\ & \ 
\mbox{\rm with $|I|-|J| \in \{ -1,0,1 \}$, $I \cap J = \emptyset$} \}
\label{Lnat2Pdesc}
\end{align}
for some $\gamma_{IJ} \in \RR \cup \{ +\infty \}$
indexed by $(I,J)$.
\finboxARX
\end{theorem}

\begin{example} \rm \label{EXLnat2poly}
Consider
$ S = \{ (0,0,0), (1,1,0), (0,1,1), (1,2,1) \}$,
which is \LLnat-convex (but not \Lnat-convex).
Indeed we have 
$S = S_{1} + S_{2}$
with two \Lnat-convex sets
$S_{1}= \{ (0,0,0), (1,1,0) \}$ and $S_{2} = \{ (0,0,0), (0,1,1) \}$.
(This is an example taken from \cite[Example 3.11]{MS01rel}.)
All four points of $S$ lie on the hyperplane
$x_{1} - x_{2} + x_{3} = 0$,
and it is easy to see, by inspection, that 
\begin{align*} 
 S & = \{ x \in \ZZ\sp{3} 
          \mid  x_{1} - x_{2} + x_{3} = 0, \ 0 \leq x_{1} \leq  1, \ 0 \leq x_{3} \leq  1 \}
\\ & 
= \{ x \in \ZZ\sp{3} 
      \mid  x_{1} - x_{2} + x_{3} \leq 0, \ -x_{1} + x_{2} - x_{3} \leq 0, 
\\ & \phantom{= \{ x \in \ZZ\sp{3} \mid} \, 
    \ x_{1} \leq  1, \ -x_{1} \leq  0,
    \ x_{3} \leq  1, \ -x_{3} \leq  0 \}
\end{align*}
gives a polyhedral description of the form of \eqref{Lnat2Sdesc}
in Theorem~\ref{THlnat2polydesc}.
\finbox
\end{example}

\begin{remark} \rm \label{RMedgeM2}
The inequality $x(J) - x(I) \leq \gamma_{IJ}$ 
in \eqref{L2Pdesc}
can be rewritten as 
$\langle \unitvec{J} - \unitvec{I}, x \rangle \leq \gamma_{IJ}$, 
where
$\langle \cdot,  \cdot \rangle$ denotes the inner product,
and $\unitvec{I}$ and $\unitvec{J}$ are the characteristic vectors of 
$I$ and $J$, respectively. 
This shows that if the polyhedron $P$ is full-dimensional,
the normal vector of a facet
of $P$ is of the form of 
$c (\unitvec{J} - \unitvec{I})$  with $c \ne 0$.
\finbox
\end{remark}

\begin{remark} \rm \label{RMnormalfan}
Theorem~\ref{THl2polydesc} as well as Theorem~\ref{THlnat2polydesc}
is consistent with the general result 
\cite[Proposition 7.12]{Zie07} 
on convex polytopes (bounded polyhedra)
that
the normal fan of the Minkowski sum of two polytopes
$P_{1}$ and $P_{2}$
is the common refinement of the individual fans, 
which means that each normal cone of $P=P_{1} + P_{2}$ 
is the intersection of a normal cone of $P_{1}$ and that of $P_{2}$.
By Theorem~\ref{THlpolydesc},
every normal cone of $P_{k}$ $(k=1,2)$ is spanned by vectors 
of the form $\unitvec{j} - \unitvec{i}$.
These two facts, when combined, indicate 
that each normal cone of $P$ is spanned by vectors 
of the form 
$\sum_{(i,j) \in K} c_{ij}(\unitvec{j} - \unitvec{i})$
with coefficients $c_{ij} \in \RR$
for some set $K$ of pairs $(i,j)$,
where
$I := \{ i \mid (i,j) \in K \}$ and $J := \{ j \mid (i,j) \in K \}$
may not be disjoint.
The expression \eqref{L2Pdesc} shows
that we can take $c_{ij}=1$,
from which follows that $I \cap J = \emptyset$ can be assumed.
\finbox
\end{remark}

\subsection{Proof of Theorem~\ref{THl2polydesc}}
\label{SCpolydescprf}

In this section we prove Theorem~\ref{THl2polydesc}
for an \LL-convex set (or polyhedron)
using concepts and results from discrete convex analysis \cite{Mdcasiam}.
The reader is referred to Appendix \ref{SClmfn}
for the definitions of \LL-convex and \MM-convex functions.

For a set $T \subseteq \ZZ\sp{n}$, in general, the {\em indicator function}
$\delta_{T}: \ZZ\sp{n} \to \{ 0, +\infty \}$
is defined by
\[
\delta_{T}(x)  =
   \left\{  \begin{array}{ll}
    0            &   (x \in T) ,      \\
   + \infty      &   (x \not\in T).  \\
                      \end{array}  \right.
\]
For an integer-valued function 
$h: \ZZ\sp{n} \to \ZZ \cup \{ +\infty \}$
(with $h(x_{0}) < +\infty$ for some $x_{0} \in \ZZ\sp{n}$),
the (integral) {\em conjugate function} 
$h\sp{\bullet}: \ZZ\sp{n} \to \ZZ \cup \{ +\infty \}$ 
is defined by
\begin{align}
h\sp{\bullet}(p)  &= \sup\{  \langle p, x \rangle - h(x)   \mid x \in \ZZ\sp{n} \}
\qquad ( p \in \ZZ\sp{n}).
 \label{conjvexZpZ} 
\end{align}
The  (integral)  {\em subdifferential} of $h$ at $x$ is defined by
\begin{equation} \label{subgZRdef0}
 \subgZ h(x)
= \{ p \in  \ZZ\sp{n} \mid    
  h(y) - h(x)  \geq  \langle p, y - x \rangle   \ \ \mbox{\rm for all }  y \in \ZZ\sp{n} \}.
\end{equation}
A vector $p$ belonging to $\subgR h(x)$
is called an  (integral) {\em subgradient} of $h$ at $x$.

Let $S \subseteq \ZZ\sp{n}$ be an \LL-convex set,
and denote its indicator function by $g$, that is, 
$g = \delta_{S}: \ZZ\sp{n} \to  \{ 0, +\infty \}$. 
Since $S$ is an \LL-convex set, $g$
is an \LL-convex function.
Let $f$ denote the conjugate function of 
$g$, that is, $f = g\sp{\bullet}$.
By the conjugacy relation
between \LL-convexity and \MM-convexity
(\cite[Theorem~8.48]{Mdcasiam}),
the function
$f: \ZZ\sp{n} \to \ZZ \cup \{ +\infty \}$
is an \MM-convex function, and $g = f\sp{\bullet}$.
In addition, $f(\bm{0})=0$ and
$f$ is positively homogeneous, since it is the conjugate of an indicator function.
We make use of a fundamental relation 
\begin{equation} 
 S =  \subgZ f(\bm{0})
 = \{ p \in  \ZZ\sp{n}  \mid 
    \bm{0} \in \argmin_{z} \{ f(z) -  \langle p, z \rangle \} \ \},
\label{M2SisSubgf}
\end{equation}
which can be proved as
\begin{equation*} 
 \subgZ f(\bm{0}) 
 = \{ p  \mid    
  f(y)  \geq  \langle p, y \rangle   \ (\forall  y \in \ZZ\sp{n}) \}
 = \{ p \mid  f\sp{\bullet}(p) = 0 \}
 = \{ p \mid  g(p) = 0 \}
 = S.
\end{equation*}
See Section~8.1.3 (in particular, (8.17) and Fig.~8.1) of \cite{Mdcasiam}
for the correspondence between indicator functions
and positively homogeneous convex functions in discrete convex analysis.

In \eqref{M2SisSubgf}, the function
$f(z) -  \langle p, z \rangle$
is \MM-convex.
For a general \MM-convex function
$h: \ZZ^{n} \to \RR \cup \{ +\infty  \}$,
it is known as \MM-optimality criterion
\cite[Theorem~8.32]{Mdcasiam} that 
a vector $z^{*} \in \ZZ\sp{n}$
with $h(z^{*}) < +\infty$
is a minimizer of $h$
if and only if
\begin{equation} \label{M2optcr1}
h(z^{*}) \leq h(z^{*} +  \unitvec{J} - \unitvec{I}) 
\end{equation}
for all $(I,J)$ with $|I| =|J|$ and $I \cap J = \emptyset$.
This condition \eqref{M2optcr1}
for $h(z) = f(z) -   \langle p, z \rangle$ and $z^{*} =\bm{0}$
reads
\begin{equation*}  
  \langle \unitvec{J} - \unitvec{I}, p \rangle \leq 
 f(\unitvec{J} - \unitvec{I}).
\end{equation*}
Combining this with \eqref{M2SisSubgf}, we obtain
\begin{equation*} 
 S= \{ p \in  \ZZ\sp{n} \mid    
  \langle \unitvec{J} - \unitvec{I}, p \rangle \leq 
 f(\unitvec{J} - \unitvec{I}),
  \ \mbox{$\forall (I,J)$: $|I|=|J|$, $I \cap J = \emptyset$}  \} .
\end{equation*}
This gives the desired polyhedral description in \eqref{L2Sdesc}
with $\gamma_{IJ} = f(\unitvec{J} - \unitvec{I})$.

The expression \eqref{L2Pdesc} for an \LL-convex polyhedron
can be established in a similar manner
by using the polyhedral version of the conjugacy relation
between \LL-convexity and \MM-convexity,
which can be derived easily from 
the conjugacy between polyhedral
{\rm L}-convex and {\rm M}-convex functions
in \cite[Theorem~8.4]{Mdcasiam}.
Theorem~\ref{THl2polydesc} also follows from Theorem~\ref{THl2polydescCyc} 
to be established later.

\begin{remark} \rm \label{RMdescL2adj}
The number of inequalities necessary to describe an \LL-convex set
in $\ZZ\sp{n}$ can be exponential in $n$.
This can be seen as follows.
Consider a matroid intersection problem,
and let $B$ be a common base.
The set of the characteristic vectors of common bases
is an \MM-convex set contained in $\{ 0,1 \}\sp{n}$.
Let $Q$ denote the convex hull of this \MM-convex set.
The tangent cone of $Q$ at $\unitvec{B}$ (the characteristic vector of $B$),
to be denoted by $Q_{B}$,
is an \MM-convex polyhedron
and the extreme rays of tangent cone $Q_{B}$ correspond to common bases adjacent to $B$.
The adjacency relation in matroid intersection has been investigated 
in \cite{FT88,Iwa02adj},
and an instance of a common base with exponentially many adjacent common bases
has been constructed in \cite{Iwa21adj}. 
Let $B$ be such a common base with exponentially many adjacent common bases.
Then the tangent cone $Q_{B}$ has exponentially many extreme rays.
Next consider the dual cone of $Q_{B}$, and call it $P$.
By the conjugacy between \MM-convexity and \LL-convexity
(\cite[Theorem~8.48]{Mdcasiam}),
$P$ is an \LL-convex polyhedron,
and its facets correspond to extreme rays of $Q_{B}$.
Moreover, $P$ is an integral polyhedron, implying
that $P$ is the convex hull of an \LL-convex set $S = P \cap \ZZ\sp{n}$.
It follows that the description of this \LL-convex set $S$ 
requires exponentially many inequalities.
\finbox
\end{remark}

\section{Refinement of the polyhedral description}
\label{SCpolydescG}

In Theorems \ref{THl2polydesc} and \ref{THlnat2polydesc} we have identified
inequalities of the form
$x(J) - x(I) \leq \gamma_{IJ}$ 
to describe \LL-convex and \LLnat-convex sets (and polyhedra).
In this section we establish their refinements
in Theorems \ref{THl2polydescCyc} and \ref{THlnat2polydescCyc} 
with the aid of a graph representation.

\subsection{Graph representations}
\label{SCgraphrep}

Let $S = S_{1}+S_{2}$ be an \LL-convex set
with two L-convex sets $S_{1}$ and $S_{2}$.
By Theorem~\ref{THlpolydesc},
each L-convex polyhedron $S_{k}$
$(k=1,2)$ 
is described as
\begin{equation} 
S_{k} = \{ y \in \ZZ\sp{n} \mid 
  y_{j} - y_{i} \leq \gamma_{ij}\sp{(k)} \ \ ((i,j) \in E_{k}) \},
\label{PkdescG} 
\end{equation}
where  $E_{k} \subseteq (N \times N) \setminus \{ (i,i) \mid i \in N \}$
and 
$\gamma_{ij}\sp{(k)} \in \ZZ$ (finite-valued)
for all $(i,j) \in E_{k}$.
We do not impose triangle inequality on 
$\gamma\sp{(k)} = ( \gamma_{ij}\sp{(k)} \mid (i,j) \in E_{k})$,
which is allowed  by Remark~\ref{RMpolydescL}.

With reference to \eqref{PkdescG} we consider a directed graph
$G_{k}=(N,E_{k})$ with vertex set 
$N = \{ 1,2,\ldots, n  \}$
and edge set $E_{k}$.
Each edge $(i,j) \in E_{k}$ is associated with a length of $\gamma_{ij}\sp{(k)}$.
We denote the reorientation of $G_{2}$
by $G_{2}\sp{\circ}=(N,E_{2}\sp{\circ})$,
where an edge $(i,j)$ of $G_{2}\sp{\circ}$
has length $\gamma\sp{(2)}_{ji}$.
The union of 
$G_{1}$ and $G_{2}\sp{\circ}$ is denoted by 
$G_{1} + G_{2}\sp{\circ} = (N, E_{1} \cup E_{2}\sp{\circ})$
or simply by
$G_{*} = (N, E_{*})$
with
$E_{*} =  E_{1} \cup E_{2}\sp{\circ}$.
Parallel edges may exist in $G_{*} = G_{1} + G_{2}\sp{\circ}$.
When necessary, an edge connecting $i$ to $j$
in $E_{1}$ (resp., $E_{2}\sp{\circ}$)
is denoted by 
$(i,j)_{1}$ 
(resp., $(i,j)_{2}$).
That is,
 $(i,j)_{1} \in E_{1}$ and $(i,j)_{2} \in E_{2}\sp{\circ}$.
The edge length $\gamma$ in $G_{*} = (N, E_{*})$ is defined for $e \in E_{*}$ by
\begin{equation} \label{edgelenG}
\gamma(e) = 
\begin{cases}
 \gamma\sp{(1)}_{ij} & \text{($e=(i,j)_{1} \in E_{1}$)}, \\
 \gamma\sp{(2)}_{ji} & \text{($e=(i,j)_{2} \in E_{2}\sp{\circ}$)}. 
\end{cases}
\end{equation}

For $k=1,2$, each graph $G_{k}$ contains no negative cycles 
by $S_{k} \ne \emptyset$
(Remark~\ref{RMpolydescL}).
Let $\lambda(i,j;G_{k})$ denote the minimum $\gamma\sp{(k)}$-length 
of a path connecting $i$ to $j$ in $G_{k}$, where 
$\lambda(i,j;G_{k}) = +\infty$
if there is no such path. 
Define $\lambda(j,i;G_{2}\sp{\circ}) := \lambda(i,j;G_{2})$.

\begin{example} \rm \label{EXl2dim4polyG}
Let $S = S_{1} + S_{2}$ be an \LL-convex set defined by 
\begin{align*} 
 S_{1} &= \{ y \in \ZZ\sp{4}  \mid 
 y_{2} - y_{1} \leq 3, \
 y_{3} - y_{2} \leq 5, \
 y_{4} - y_{3} \leq 8, \
 y_{1} - y_{4} \leq 7 
 \},
\\
 S_{2} &= \{ z \in \ZZ\sp{4}  \mid 
 z_{1} - z_{3} \leq 2, \
 z_{4} - z_{1} \leq 1, \
 z_{2} - z_{3} \leq 3, \
\nonumber \\ &  \phantom{= \{ z \in \ZZ\sp{4}  \mid} \ \,  
 z_{4} - z_{2} \leq 5, \
 z_{3} - z_{4} \leq 2
 \}.
\end{align*}
The graphs $G_{1}$ and $G_{2}$ 
associated with $S_{1}$ and $S_{2}$
are illustrated in Fig.~\ref{FGL2graph4}, where
vertex $i$ is shown by $\framebox{$i$}$ and
\begin{align*} 
& \gamma_{12}\sp{(1)} = 3, \quad  
\gamma_{23}\sp{(1)} = 5, \quad 
\gamma_{34}\sp{(1)} = 8, \quad
\gamma_{41}\sp{(1)} = 7;  
\\ &
\gamma_{31}\sp{(2)} = 2, \quad  
\gamma_{14}\sp{(2)} = 1, \quad 
\gamma_{32}\sp{(2)} = 3, \quad
\gamma_{24}\sp{(2)} = 5, \quad
\gamma_{43}\sp{(2)} = 2.  
\end{align*}
The graph $G_{*} = G_{1} + G_{2}\sp{\circ}$ is also shown.
\finbox
\end{example}

\begin{figure}\begin{center}
\includegraphics[height=40mm]{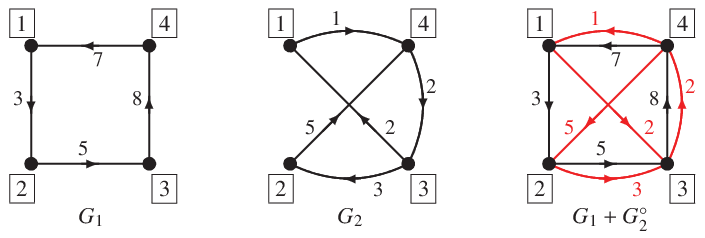}
\caption{Graphs for {\rm L}-convex polyhedra (Example~\ref{EXl2dim4polyG})} 
\label{FGL2graph4}
\end{center}\end{figure}

We represent a directed cycle in $G_{*}$ 
as a (cyclic) sequence of edges:
$C = e_{1} e_{2} \cdots e_{\ell-1} e_{\ell}$,
where $e_{k} \in E_{*}$ 
for $k=1,2,\ldots,\ell$ and 
the head (terminal vertex) of $e_{k}$ is the tail (initial vertex)
of $e_{k+1}$ for $k=1,2,\ldots,\ell$ 
with the convention of  
$e_{\ell +k}=e_{k}$.
When we speak of a cycle, 
we always mean a directed cycle in this paper. 
We denote the length of $C$ by
\begin{equation} 
\gamma(C) := \sum_{k=1}\sp{\ell} \gamma(e_{k})
 = \sum_{(i,j)_{1} \in C} \gamma_{ij}\sp{(1)} 
 + \sum_{(i,j)_{2} \in C} \gamma_{ji}\sp{(2)} .
\label{gammaCGdef}
\end{equation}
A cycle $C$ is called {\em simple}
if the vertices on $C$ are all distinct.

We call a cycle 
{\em mixed}
if it contains edges from 
both $E_{1}$ and $E_{2}\sp{\circ}$.
For a mixed cycle $C$,
we define its {\em break vertices} 
$( i_{1}, j_{1}, \ldots, i_{m}, j_{m})$
and an inequality associated with $C$ by
\begin{equation}
x(\{ j_{1}, \ldots, j_{m} \}) - x(\{ i_{1}, \ldots, i_{m} \}) 
  \leq  \gamma(C),
\label{L2ineqCyc}
\end{equation}
which is used to formulate Theorem~\ref{THl2polydescCyc}.

The break vertices 
$( i_{1}, j_{1}, \ldots, i_{m}, j_{m})$
are defined as follows.
Assume, without loss of generality, that
a mixed cycle 
$C$ is represented as
$C = e_{1} e_{2} \cdots e_{\ell-1} e_{\ell}$ with
$e_{1} \in E_{1}$ and $e_{\ell} \in E_{2}\sp{\circ}$,
which is possible by a cyclic permutation of the edges in $C$.
The first break vertex $i_{1}$ is the tail of $e_{1}$,
which is also the head of $e_{\ell}$.
In general, a break vertex 
$i_{r}$ is the tail of $e_{k} \in C \cap E_{1}$ 
and the head of $e_{k-1} \in C \cap E_{2}\sp{\circ}$
for some $k \ (=: s_{r})$,
whereas 
$j_{r}$ is the head of $e_{k'} \in C \cap E_{1}$ 
and the tail of $e_{k'+1} \in C \cap E_{2}\sp{\circ}$
for some $k' \ (=: t_{r})$.
Including the break vertices we may represent $C$ as
\begin{align}  
C = & \ 
 |i_{1}| \underbrace{e_{1} \cdots e_{t_{1}}}_{E_{1}} |j_{1}|
    \underbrace{e_{t_{1}+1} \cdots e_{s_{2}-1}}_{E_{2}\sp{\circ}}
 |i_{2}| \underbrace{e_{s_{2}} \cdots e_{t_{2}}}_{E_{1}} |j_{2}| 
  e_{t_{2}+1} \cdots
\nonumber \\ & \ 
\cdots e_{s_{m}-1}  
 |i_{m}| \underbrace{e_{s_{m}} \cdots e_{t_{m}}}_{E_{1}}
 |j_{m}| \underbrace{e_{t_{m}+1} \cdots e_{\ell}}_{E_{2}\sp{\circ}} |i_{1}| .
\label{dircycBk}
\end{align}
Note that $s_{1} = 1$ and
$s_{r} \leq  t_{r} \leq s_{r+1} -2$
for $r=1,2,\ldots,m$ with $s_{m+1} = \ell+ 1$,
and that 
$e_{k} \in E_{1}$ if $s_{r} \leq k \leq  t_{r}$
and 
$e_{k} \in E_{2}\sp{\circ}$ if $t_{r}+1 \leq k \leq  s_{r+1}-1$.
For $r=1,2,\ldots,m$,
an interval of $E_{1}$-edges 
starts at vertex $i_{r}$ and ends at vertex $j_{r}$,
and an interval of $E_{2}\sp{\circ}$-edges 
starts at vertex $j_{r}$ and ends at vertex $i_{r+1}$.
We often refer to an interval of $E_{1}$-edges
(resp., $E_{2}\sp{\circ}$-edges) 
as an $E_{1}$-interval
(resp., $E_{2}\sp{\circ}$-interval).
The index $m$ is equal to the number of $E_{1}$-intervals in $C$,
which is also equal to 
the number of $E_{2}\sp{\circ}$-intervals.

\begin{example} \rm \label{EXl2graphcyc}
The concepts introduced above are illustrated 
for $G_{*} = G_{1} + G_{2}\sp{\circ}$ in Fig.~\ref{FGL2graph4}. 

\begin{itemize}
\item
For a simple mixed cycle 
$C  = (1,2)_{1} (2,3)_{2} (3,4)_{2} (4,1)_{2}$,
there is a pair of break vertices $(i_{1}, j_{1})=(1,2)$.
We have 
$\gamma(C) = 3 + (3+2+1)=9$
and $x_{2} - x_{1} \leq 9$ 
in \eqref{L2ineqCyc}.

\item
For a simple mixed cycle 
$C  = (1,2)_{1} (2,3)_{1} (3,4)_{2} (4,1)_{2}$,
there is a pair of break vertices $(i_{1}, j_{1})=(1,3)$.
We have $\gamma(C) = (3+5) + (2+1)=11$
and
$x_{3} - x_{1} \leq 11$
in \eqref{L2ineqCyc}.

\item
For a simple mixed cycle 
$C = (1,2)_{1} (2,3)_{2} (3,4)_{1} (4,1)_{2}$,
the break vertices are given by
$(i_{1}, j_{1})=(1,2)$ and $(i_{2}, j_{2})=(3,4)$.
We have
$\gamma(C) =  3 + 3 + 8 + 1 = 15$
and
$x(\{ 2,4 \}) - x(\{ 1,3 \}) \leq 15$
in \eqref{L2ineqCyc}.

\item
For a simple mixed cycle 
$C  = (3,4)_{1} (4,1)_{2} (1,3)_{2}$,
there is a pair of break vertices $(i_{1}, j_{1})=(3,4)$.
We have $\gamma(C) = 8 + (1+2)=11$
and $x_{4} - x_{3} \leq 11$
in \eqref{L2ineqCyc}.

\item
A simple mixed cycle 
$C = (3,4)_{1} (4,2)_{2} (2,3)_{2}$
has the same break vertices 
$(i_{1}, j_{1})=(3,4)$ as above
and a longer length with $\gamma(C) = 8 + (5+3)=16$.
This results in $x_{4} - x_{3} \leq 16$ in \eqref{L2ineqCyc},
which is weaker than (implied by) 
$x_{4} - x_{3} \leq 11$ in the above.

\item
A non-simple mixed cycle 
$C = (2,3)_{1} (3,4)_{1} (4,1)_{1} (1,3)_{2} (3,4)_{2} (4,2)_{2}$
has a pair of break vertices
$(i_{1}, j_{1})=(2,1)$,
and 
$\gamma(C) = (5+8+7) + (2+2+5) = 29$.
The associated inequality
$x_{1} - x_{2} \leq 29$
is implied by other inequalities in \eqref{L2ineqCyc} 
associated with simple mixed cycles, as follows.
The edge set of $C$ is the union of two
simple mixed cycles
$C\sp{(1)} = (2,3)_{1} (3,4)_{2} (4,2)_{2}$
and
$C\sp{(2)} =  (3,4)_{1} (4,1)_{1} (1,3)_{2}$.
The cycles
$C\sp{(1)}$ and $C\sp{(2)}$
have break vertices
$(i_{1}\sp{(1)}, j_{1}\sp{(1)})=(2,3)$
and $(i_{1}\sp{(2)}, j_{1}\sp{(2)})=(3,1)$, 
respectively, and give rise to
\begin{align*}
& x_{3} - x_{2} \leq \gamma(C\sp{(1)}) = 5+ (2+5) = 12,
\\ &
x_{1} - x_{3} \leq \gamma(C\sp{(2)}) = (8+7) + 2 = 17.
\end{align*}
These two inequalities, when added, imply the inequality
$x_{1} - x_{2} \leq 29$ corresponding to 
the non-simple mixed cycle $C$.
It is generally true that the inequality \eqref{L2ineqCyc} 
for a non-simple mixed cycle 
is implied by other inequalities in \eqref{L2ineqCyc} 
associated with simple mixed cycles.
\finbox
\end{itemize}
\end{example}

\begin{remark} \rm \label{RMwhyredundant}
In Example~\ref{EXl2graphcyc},
we have seen a structural reason (non-simplicity)
for redundancy.
Another (obvious) reason for redundancy
is numerical coincidence.
Such numerical redundancy is likely to occur, for example,
if $\gamma\sp{(1)}$ and $\gamma\sp{(2)}$
in the given descriptions in \eqref{PkdescG} 
satisfy triangle inequality.
There may be other reasons that lead to redundancy in 
inequalities in \eqref{L2ineqCyc}.
It is left for the future to clarify the condition for redundant inequalities.
\finbox
\end{remark}

\subsection{Refined theorems}
\label{SCthmrefined}

With the terminology and notation 
introduced in Section~\ref{SCgraphrep}, we can 
state the following theorem,
which is a refinement of Theorem~\ref{THl2polydesc}.
In \eqref{L2SdescCyc} as well as in \eqref{L2PdescCyc},
$( i_{1}, j_{1}, \ldots, i_{m}, j_{m})$ denotes the break vertices of a 
simple mixed cycle $C$
in $G_{*} = G_{1} + G_{2}\sp{\circ}$
and $\gamma(C)$ is the length of $C$ defined in \eqref{gammaCGdef}.

\begin{theorem} \label{THl2polydescCyc}
\quad

\noindent
{\rm (1)}
An \LL-convex set $S \subseteq \ZZ\sp{n}$ represented as
$S = S_{1} + S_{2}$ with $S_{1}$ and $S_{2}$ being L-convex
can be described as 
\begin{align}
S = {} & \{ x \in  \ZZ\sp{n} \mid  
 x(\{ j_{1}, \ldots, j_{m} \}) - x(\{ i_{1}, \ldots, i_{m} \}) \leq 
 \gamma(C),
\nonumber \\ & \phantom{\{ x \in  \ZZ\sp{n} \mid} \  
\mbox{\rm $C$: simple mixed cycle in $G_{*}$} \}.
 \label{L2SdescCyc}
\end{align}

\noindent
{\rm (2)}
An \LL-convex polyhedron 
$P \subseteq \RR\sp{n}$
represented as
$P = P_{1} + P_{2}$
with $P_{1}$ and $P_{2}$ being L-convex
can be described as 
\begin{align}
P = {} &  \{ x \in  \RR\sp{n} \mid  
 x(\{ j_{1}, \ldots, j_{m} \}) - x(\{ i_{1}, \ldots, i_{m} \}) \leq 
 \gamma(C),
\nonumber \\ & \phantom{\{ x \in  \RR\sp{n} \mid} \  
\mbox{\rm $C$: simple mixed cycle in $G_{*}$} \}.
 \label{L2PdescCyc}
\end{align}
\end{theorem}
\begin{proof}
Two different proofs are given in this paper.
The first is a structural proof, which 
is a refinement of the proof (Section~\ref{SCpolydescprf}) of Theorem~\ref{THl2polydesc} 
and relies on more detailed versions of 
the conjugacy theorem and \MM-optimality criterion
in discrete convex analysis.
The second is a direct algebraic proof,
which is based on Fourier--Motzkin elimination
and does not use results from discrete convex analysis.
The structural proof is given in Section~\ref{SCproofCyc}
and the algebraic proof in Section~\ref{SCproofL2fourmotz}.
\qedJIAM
\end{proof}

\begin{remark} \rm \label{RMthmcomp}
Theorem~\ref{THl2polydescCyc} is a refinement of Theorem~\ref{THl2polydesc}
in the following respects.
\begin{enumerate}

\item
Theorem~\ref{THl2polydesc} identified the form 
$x(J) - x(I) \leq \gamma_{IJ}$
of the inequalities,
but the bound $\gamma_{IJ}$ 
is not investigated.
In contrast,
Theorem~\ref{THl2polydescCyc} gives an expression 
of this $\gamma_{IJ}$ in terms of the 
given data of the constituent L-convex sets (or polyhedra).
In particular, triangle inequality is not assumed,
which is natural and convenient in applications.

\item
Theorem~\ref{THl2polydesc} involves 
all pairs $(I,J)$ of disjoint subsets $I$ and $J$
in \eqref{L2Sdesc} (or \eqref{L2Pdesc}),
whereas the expression 
\eqref{L2SdescCyc} (or \eqref{L2PdescCyc})
in Theorem~\ref{THl2polydescCyc} 
restricts itself to 
those pairs $(I,J)$ which 
are derived from the given descriptions of the constituent L-convex 
sets (or polyhedra).
\finbox
\end{enumerate}
\end{remark}

Theorem~\ref{THl2polydescCyc} is demonstrated below for 
the \LL-convex set in Example \ref{EXl2dim4polyG}.

\begin{example} \rm \label{EXl2desccyc}
Recall the \LL-convex set $S = S_{1} + S_{2}$ in Example \ref{EXl2dim4polyG},
for which the associated graphs are shown in Fig.~\ref{FGL2graph4}.
In Example~\ref{EXl2graphcyc}
we have seen typical cases of the inequality in \eqref{L2ineqCyc}
associated with a mixed cycle in $G_{*}$.
By inspecting all simple mixed cycles
in $G_{*}$, we arrive at the following
system of inequalities to describe $S$:
\begin{align} 
&  
 \phantom{x_{1} - x_{1} \leq 0, \quad\ }
 \phantom{x_{1} - x_{2} \leq 99, \quad\ }
 x_{1} - x_{3} \leq 17, \quad
 x_{1} - x_{4} \leq 11, \quad
\nonumber
\\ &
 x_{2} - x_{1} \leq 9, \quad
 \phantom{x_{2} - x_{2} \leq 0, \quad} \ \ \ 
 x_{2} - x_{3} \leq 21, \quad
 x_{2} - x_{4} \leq 15, \quad
\nonumber
\\ &
 x_{3} - x_{1} \leq 11, \quad
 x_{3} - x_{2} \leq 12, \quad
 \phantom{x_{3} - x_{3} \leq 0, \quad} \ 
 x_{3} - x_{4} \leq 17, \quad
\label{l2dim4polyGx}
\\ &
 x_{4} - x_{1} \leq 17, \quad
 x_{4} - x_{2} \leq 18, \quad
 x_{4} - x_{3} \leq 11, \quad
\nonumber
\\ &
  x_{2} + x_{4} - x_{1} - x_{3}  \leq 15,
\nonumber
\end{align}
where redundant inequalities are omitted.
In Section~\ref{SCelimappr} 
we show an alternative method to derive these inequalities.
\finbox
\end{example}

\begin{example} \rm \label{EXlongineqG}
For the \LL-convex set $S = S_{1} + S_{2}$ 
defined by \eqref{longineqS1} and \eqref{longineqS2}
in Example~\ref{EXlongineq},
the graph
$G_{1} + G_{2}\sp{\circ}$
is a cycle
$( 1, 2, \ldots, m-1, m)$.
There is only one simple mixed cycle $C$, for which
$( i_{1}, j_{1}, \ldots, i_{m}, j_{m})
=( 1, 2, \ldots, m-1, m)$.
This cycle corresponds to the inequality
$(x_{1} + x_{3} + \cdots + x_{n-1}) - (x_{2} + x_{4} + \cdots + x_{n}) \leq 0$
in \eqref{longineqS}.
\finbox
\end{example}

By the simple relation between \LL-convexity and \LLnat-convexity 
(Proposition \ref{PRfromL2toL2nat}),
Theorem~\ref{THl2polydescCyc} above
can be adapted easily to an \LLnat-convex set (or polyhedron).
Let $S = S_{1} + S_{2}\subseteq \ZZ\sp{n}$
be an \LLnat-convex set,
with $S_{1}$ and $S_{2}$ being \Lnat-convex.
Each $S_{k}$ is described as in Theorem~\ref{THlnatpolydesc}
with 
$\tilde \gamma_{ij}\sp{(k)}$ 
in \eqref{tildeGamma}.
Consider a graph 
$\tilde G_{k} =(\tilde N, \tilde E_{k})$
with $\tilde N = N \cup \{ 0 \}$ and
$\tilde E_{k} = \{ (i,j) \mid 
  \tilde \gamma_{ij}\sp{(k)} < +\infty, \ i,j \in \tilde N, \ i \neq j \} $.
Let $\tilde G_{2}\sp{\circ}$ be the reorientation of $\tilde G_{2}$, and 
let $\tilde{G}_{*} = \tilde G_{1} + \tilde G_{2}\sp{\circ}$.
The notion of a mixed cycle $C$
can be defined naturally on $\tilde{G}_{*}$,
and $\tilde \gamma(C)$ will denote 
the length of $C$ defined similarly to \eqref{gammaCGdef}
using $\tilde \gamma_{ij}\sp{(k)}$.

\begin{theorem} \label{THlnat2polydescCyc}
\quad

\noindent
{\rm (1)}
An \LLnat-convex set 
$S \subseteq \ZZ\sp{n}$
represented as
$S = S_{1} + S_{2}$
with $S_{1}$ and $S_{2}$ being \Lnat-convex
can be described as 
\begin{align}
S = {} & \{ x \in \ZZ\sp{n} \mid 
 \tilde x = (x,0) \in \ZZ\sp{n+1}, \ 
 \tilde x(\{ j_{1}, \ldots, j_{m} \}) - \tilde x(\{ i_{1}, \ldots, i_{m} \}) \leq 
 \tilde \gamma(C),
\nonumber \\ & \phantom{\{ x \in  \ZZ\sp{n} \mid} \  
\mbox{\rm $C$: simple mixed cycle in $\tilde G_{*}$} \}.
 \label{L2natSdescCyc}
\end{align}

\noindent
{\rm (2)}
An \LLnat-convex polyhedron $P \subseteq \RR\sp{n}$
represented as
$P = P_{1} + P_{2}$
with $P_{1}$ and $P_{2}$ being \Lnat-convex
can be described as 
\begin{align}
P = {} &  \{ x \in  \RR\sp{n} \mid  
 \tilde x = (x,0)  \in \RR\sp{n+1}, \ 
 \tilde x(\{ j_{1}, \ldots, j_{m} \}) - \tilde x(\{ i_{1}, \ldots, i_{m} \}) \leq 
 \tilde \gamma(C),
\nonumber \\ & \phantom{\{ x \in  \RR\sp{n} \mid} \  
\mbox{\rm $C$: simple mixed cycle in $\tilde G_{*}$} \}.
 \label{L2natPdescCyc}
\end{align}
\end{theorem}
\begin{proof}
This follows from Theorem~\ref{THl2polydescCyc} 
with Propositions \ref{PRminkowrestr} and \ref{PRfromL2toL2nat}.
\qedJIAM
\end{proof}

\subsection{Proof of Theorem~\ref{THl2polydescCyc} by discrete convex analysis}
\label{SCproofCyc}

We give a proof of Theorem~\ref{THl2polydescCyc}
relying on results from discrete convex analysis,
which is similar in vein to the proof of
Theorem~\ref{THl2polydesc}
but uses detailed versions of 
the conjugacy theorem and \MM-optimality criterion.
To be specific, 
while we used the \MM-optimality criterion 
of \cite[Theorem~8.32]{Mdcasiam}
to prove Theorem~\ref{THl2polydesc},
the proof of Theorem~\ref{THl2polydescCyc} here 
is based on another form of \MM-optimality criterion 
\cite[Theorem~8.33]{Mdcasiam}
that is applicable when an \MM-convex function is represented as 
the sum of two M-convex functions.
We shall use concepts 
such as M- and \MM-convex functions
as well as L- and \LL-convex functions.
For definitions of these concepts,
the reader is referred to Appendix \ref{SClmfn}.

\subsubsection{Step~1 (using DCA structural results)}
\label{SCproofCycDCA}

Let $S \subseteq \ZZ\sp{n}$ be an \LL-convex set,
represented as
$S = S_{1} + S_{2}$ 
with {\rm L}-convex sets
$S_{1}$ and $S_{2}$.
Denote the indicator functions of 
$S$ and $S_{k}$
by 
$g$ and $g_{k}$,
respectively; that is,
$g = \delta_{S}$ and
$g_{k} = \delta_{S_{k}}$ for $k=1,2$.
Since $S_{k}$ is an L-convex set,
each $g_{k}: \ZZ\sp{n} \to  \{ 0, +\infty \}$ 
is an L-convex function. 
By $S = S_{1} + S_{2}$, the function
$g$ is equal to the (integral) infimal convolution 
$g_{1} \Box g_{2}$ of $g_{1}$ and $g_{2}$, that is,
\[
g(x) =  (g_{1} \Box g_{2})(x) = \inf\{ g_{1}(y)+ g_{2}(z) 
        \mid x = y+z; \  y,z \in \ZZ\sp{n}  \},
\]
which shows that
$g: \ZZ\sp{n} \to  \{ 0, +\infty \}$ 
is an \LL-convex function.

Let $f_{k}$ denote the conjugate of the function $g_{k}$,
that is,
$f_{k} = g_{k}\sp{\bullet}$ for $k=1,2$.
By the conjugacy theorem 
\cite[Theorem~8.12]{Mdcasiam},
each $f_{k}: \ZZ\sp{n} \to \ZZ \cup \{ +\infty \}$ 
is an {\rm M}-convex function, 
and $g_{k} = f_{k}\sp{\bullet}$.
In addition, 
$f_{k}(\bm{0})=0$ and
$f_{k}$ is positively homogeneous, since it is the conjugate of an indicator function.

Define $f= f_{1} + f_{2}$,
which is an \MM-convex function.
We have
\begin{equation} \label{M2fL2gconj}
 f = f_{1} + f_{2}
= g_{1}\sp{\bullet} + g_{2}\sp{\bullet} 
= (g_{1} \Box g_{2})\sp{\bullet} 
=  g\sp{\bullet} ,
\end{equation}
where the equality 
$g_{1}\sp{\bullet} + g_{2}\sp{\bullet} = (g_{1} \Box g_{2})\sp{\bullet}$
is in \cite[p.~229, (8.38)]{Mdcasiam}.
We also have
\begin{equation} \label{M2fconjL2g}
 f\sp{\bullet} = 
(f_{1} + f_{2})\sp{\bullet}
= f_{1}\sp{\bullet} \Box  f_{2}\sp{\bullet} 
= g_{1} \Box g_{2}
=  g ,
\end{equation}
where the equality
$(f_{1} + f_{2})\sp{\bullet}= f_{1}\sp{\bullet} \Box  f_{2}\sp{\bullet}$ 
is due to \cite[Theorem~8.36]{Mdcasiam}.
Thus we obtain $f =  g\sp{\bullet}$ and $f\sp{\bullet}=  g$,
which allows us to  
use the expression 
$S =  \subgZ f(\bm{0})$
in \eqref{M2SisSubgf} with $f=f_{1}+ f_{2}$.
Therefore,
\begin{align} 
 S =  \subgZ f(\bm{0})
&
 = \{ p \in  \ZZ\sp{n}  \mid 
    \bm{0} \in \argmin_{z} \{ f(z) -  \langle p, z \rangle \} \ \}
\nonumber \\ &
 = \{ p \in  \ZZ\sp{n}  \mid 
    \bm{0} \in \argmin_{z} \{ (f_{1}(z) -  \langle p, z \rangle) + f_{2}(z) \} \ \}.
\label{M2Sargmin}
\end{align}

In \eqref{M2Sargmin}, the functions
$f_{1}(z) -   \langle p, z \rangle$ and $f_{2}(z)$
are M-convex.
For a general \MM-convex function
$h: \ZZ^{n} \to \RR \cup \{ +\infty  \}$
represented as $h= h_{1}+h_{2}$
with M-convex functions $h_{1}$ and $h_{2}$,
the second form of \MM-optimality criterion 
\cite[Theorem~8.33]{Mdcasiam}
states that a vector $z^{*} \in \ZZ\sp{n}$
with $h(z^{*}) < +\infty$
is a minimizer of $h$
if and only if
\begin{align}  
& \sum_{r=1}\sp{m} 
  [h_{1}(z^{*} -\unitvec{i_{r}} + \unitvec{j_{r}}) - h_{1}(z^{*})]
\nonumber \\ &
 {} + \sum_{r=1}\sp{m} 
  [h_{2}(z^{*} - \unitvec{i_{r+1}} + \unitvec{j_{r}}) - h_{2}(z^{*})]
   \geq 0 
\label{M2optCycle}
\end{align}
for any distinct 
$i_{1}, j_{1}, \ldots, i_{m}, j_{m}\in N$,
where $i_{m+1}=i_{1}$ by convention.%
\footnote{
The statement of \cite[Theorem~8.33]{Mdcasiam} imposes the condition 
``$\{ i_{1}, \ldots, i_{m} \} \cap \{ j_{1}, \ldots, j_{m} \} =\emptyset$''
which allows the possibility of 
$i_{p} = i_{q}$ or $j_{p} = j_{q}$ for $p \neq q$,
but this can be strengthened to the condition that 
$i_{1}, \ldots, i_{m}, j_{1}, \ldots, j_{m}$ should be 
all distinct. There is a typo in \cite[page 228, line 5]{Mdcasiam}:
``$f_{2}(x + \chi_{u_{i+1}} - \chi_{v_{i}})$'' 
should be
``$f_{2}(x - \chi_{u_{i+1}} + \chi_{v_{i}})$.'' 
}  
The condition \eqref{M2optCycle}
for $h_{1}(z) = f_{1}(z) -   \langle p, z \rangle$,
$h_{2}(z) = f_{2}(z)$, and $z^{*} =\bm{0}$
reads
\begin{equation}   \label{subgM2fncycIneq}
 \sum_{r=1}\sp{m} ( p_{j_{r}} - p_{i_{r}} ) 
\leq 
 \sum_{r=1}\sp{m} 
( f_{1}(\unitvec{j_{r}}-\unitvec{i_{r}}) 
  +  f_{2}(\unitvec{j_{r}} -\unitvec{i_{r+1}})  ),
\end{equation}
which is a necessary and sufficient condition for $p$ to be in $S$.
Therefore,
\begin{equation}
S =  \{ p \in  \ZZ\sp{n} \mid  
 \mbox{\eqref{subgM2fncycIneq} for all distinct 
   \  $i_{1}, j_{1}, \ldots, i_{m}, j_{m}$} \}.
\label{subgM2fncyc}
\end{equation}

On the right-hand side of the inequality \eqref{subgM2fncycIneq},
we observe
\begin{align} 
 f_{k}(\unitvec{j}-\unitvec{i}) 
 & = \sup\{ \langle \unitvec{j}-\unitvec{i},x \rangle \mid x \in S_{k} \}
\nonumber \\
  & = \sup\{  x_{j} - x_{i}  \mid x \in S_{k} \}
 =\lambda(i,j;G_{k}),
 \label{M2prooffkxxlambda}
\end{align}
where 
the first equality is due to the definition of 
$f_{k}= \delta_{S_{k}}\sp{\bullet}$
and the last equality is 
a fundamental relation between 
the maximum potential difference
and the shortest path length;
see, e.g., \cite[Theorem~8.3]{Sch03}.
(Recall from Section~\ref{SCgraphrep} that
$\lambda(i,j;G_{k})$ denotes 
the minimum $\gamma\sp{(k)}$-length of a path connecting $i$ to $j$ in $G_{k}$.)
With the use of \eqref{M2prooffkxxlambda}
we can rewrite the right-hand side of \eqref{subgM2fncycIneq} as
\[
 \sum_{r=1}\sp{m} 
( f_{1}(\unitvec{j_{r}}-\unitvec{i_{r}}) 
  +  f_{2}(\unitvec{j_{r}} -\unitvec{i_{r+1}})  )
=  \sum_{r=1}\sp{m} (
 \lambda(i_{r},j_{r};G_{1}) +
 \lambda(j_{r},i_{r+1};G_{2}\sp{\circ}) ).
\]
By introducing notation
\begin{equation}   \label{lambdadef}
 \lambda( i_{1}, j_{1}, \ldots, i_{m}, j_{m}) :=
 \sum_{r=1}\sp{m} (
 \lambda(i_{r},j_{r};G_{1}) +
 \lambda(j_{r},i_{r+1};G_{2}\sp{\circ}) )
\end{equation}
and changing the variable $p$ to $x$,
we can rewrite the inequality in 
\eqref{subgM2fncycIneq} as
\begin{equation}   \label{pJpIcycG2}
 x(\{ j_{1}, \ldots, j_{m} \}) - x(\{ i_{1}, \ldots, i_{m} \}) \leq 
 \lambda( i_{1}, j_{1}, \ldots, i_{m}, j_{m})
\end{equation}
and the representation of $S$ in \eqref{subgM2fncyc} as
\begin{equation}
S =  \{ x \in  \ZZ\sp{n} \mid  
\mbox{\eqref{pJpIcycG2}
 for all distinct \  
$i_{1}, j_{1}, \ldots, i_{m}, j_{m}$} \}.
 \label{subgM2fncyc2}
\end{equation}

\subsubsection{Step~2 (using cycle decomposition)}
\label{SCproofCycG}

The next step of the proof is to relate \eqref{subgM2fncyc2}
to simple mixed cycles in $G_{*} = G_{1} + G_{2}\sp{\circ}$.
Let $C$ be a (simple or non-simple)  mixed cycle
with break vertices
$( i_{1}, j_{1}, \ldots, i_{m}, j_{m})$,
and consider the inequality
\begin{equation} \label{L2ineqCycagain}
 x(\{ j_{1}, \ldots, j_{m} \}) - x(\{ i_{1}, \ldots, i_{m} \}) \leq 
 \gamma(C)
\end{equation}
in \eqref{L2ineqCyc},
where $\gamma(C)$ denotes the length of the cycle $C$
defined in \eqref{gammaCGdef}. 
Using these inequalities for simple mixed cycles $C$,
we define $\hat S \subseteq \ZZ\sp{n}$ by
\begin{equation}
\hat S :=  \{ x \in  \ZZ\sp{n} \mid  
\mbox{\rm \eqref{L2ineqCycagain}
 for every simple mixed cycle $C$} \}.
 \label{L2SdescCycPrf}
\end{equation}
We want to show that $S = \hat S$,
which is \eqref{L2SdescCyc}
in Theorem~\ref{THl2polydescCyc}.

The inclusion $S \subseteq \hat S$ is easy to see.
Let 
$C = e_{1} e_{2} \cdots e_{\ell-1} e_{\ell}$
be a simple mixed cycle with break vertices
$(i_{1}, \ldots, i_{m}, j_{1}, \ldots, j_{m})$.
Since
$\lambda(i_{r},j_{r};G_{1})$ 
and $\lambda(j_{r}, i_{r+1};G_{2}\sp{\circ})$
denote shortest path lengths, we have
\begin{align*}
 \lambda( i_{1}, j_{1}, \ldots, i_{m}, j_{m}) & =
 \sum_{r=1}\sp{m} (
 \lambda(i_{r},j_{r};G_{1}) +
 \lambda(j_{r},i_{r+1};G_{2}\sp{\circ}) )
\\ &
\leq  \sum_{k=1}\sp{\ell} \gamma(e_{k}) = \gamma(C),
\end{align*}
which implies $S \subseteq \hat S$.

The reverse inclusion $S \supseteq \hat S$ 
can be shown as follows.
Let 
$( i_{1}, j_{1}, \ldots, \allowbreak i_{m}, j_{m})$
be an arbitrary tuple of distinct indices
with $\lambda( i_{1}, j_{1}, \ldots, i_{m}, j_{m}) < +\infty$.
We will show that there exists a family 
of simple mixed cycles
such that 
the inequalities \eqref{L2ineqCycagain}
for this family
imply the inequality  
\eqref{pJpIcycG2} for 
$( i_{1}, j_{1}, \ldots, \allowbreak i_{m}, j_{m})$.
Then the inclusion $S \supseteq \hat S$ follows.

Since $\lambda( i_{1}, j_{1}, \ldots, i_{m}, j_{m})$ is finite, 
it follows from the definition 
in \eqref{lambdadef} that
$\lambda(i_{r},j_{r};G_{1}) < +\infty$ and 
$\lambda(j_{r},i_{r+1};G_{2}\sp{\circ}) < +\infty$ 
for $r=1,2,\ldots, m$.
Let $L_{r}\sp{(1)}$ be a shortest path from $i_{r}$ to $j_{r}$ in $G_{1}$
with minimum number of edges.
Similarly,
let $L_{r}\sp{(2)}$ 
be a shortest path from $j_{r}$ to $i_{r+1}$ in $G_{2}\sp{\circ}$
with minimum number of edges.
We then have
\begin{equation}  \label{gammaLrlamda}
\gamma(L_{r}\sp{(1)}) = \lambda(i_{r},j_{r};G_{1}),
\  
\gamma(L_{r}\sp{(2)}) = \lambda(j_{r},i_{r+1};G_{2}\sp{\circ})
\quad 
(r=1,2,\ldots, m).
\end{equation}
The concatenation (series connection) of
$L_{1}\sp{(1)},  L_{1}\sp{(2)}, 
L_{2}\sp{(1)},  L_{2}\sp{(2)}, \ldots, 
L_{m}\sp{(1)},  L_{m}\sp{(2)}$
determines a cycle $\tilde C$
in $G_{*}$. 
This cycle $\tilde C$ is not necessarily simple.
(The paths
$L_{1}\sp{(1)},  L_{1}\sp{(2)}, 
L_{2}\sp{(1)},  L_{2}\sp{(2)}, \ldots, 
L_{m}\sp{(1)},  L_{m}\sp{(2)}$
may possibly have common edges,
and in such a case, it will be more precise to call 
$\tilde C$ a closed walk, although we refer to it as a cycle.)

We can decompose $\tilde C$ into a family of simple cycles, say, 
$\{ C_{q} \mid q \in K \}$.
By \eqref{gammaLrlamda} as well as \eqref{lambdadef} we have
\begin{align} 
& \sum_{q \in K} \gamma(C_{q}) 
= \sum_{r=1}\sp{m} ( \gamma(L_{r}\sp{(1)}) + \gamma(L_{r}\sp{(2)}) )
\nonumber \\
& = \sum_{r=1}\sp{m} (\lambda(i_{r},j_{r};G_{1}) + \lambda(j_{r},i_{r+1};G_{2}\sp{\circ}))
= \lambda( i_{1}, j_{1}, \ldots, i_{m}, j_{m}).
 \label{gammaCqHatlamda}
\end{align}
Each cycle $C_{q}$ is simple but may or may not be mixed.
If $C_{q}$ is mixed, 
we can think of an inequality \eqref{L2ineqCycagain}
associated with $C_{q}$, 
which we express as
\begin{equation} \label{L2ineqCycagainCq}
 x(J_{q}) - x(I_{q}) \leq  \gamma(C_{q}),
\end{equation}
where $I_{q} \cup J_{q}$ (with an appropriate ordering of elements)
is the break vertices of $C_{q}$.
If $C_{q}$ is not mixed, 
we have
$\gamma(C_{q}) \geq 0$ because 
neither $G_{1}$ nor $G_{2}\sp{\circ}$
contains negative cycles,
and hence the inequality \eqref{L2ineqCycagainCq} 
is also true under the definition of $I_{q} = J_{q} = \emptyset$.

A crucial observation here is 
the following counting relation.
For $i \in N$ and $I \subseteq N$ we define 
\begin{equation} \label{memberindicator1}
\varepsilon(i, I) = 
\begin{cases}
  1 & \text{($i \in I$)}, \\
  0 & \text{($i \notin I$)}. 
\end{cases}
\end{equation}

\begin{lemma} \label{LMcntCyc}
For each $i \in N$, we have
\begin{align} & 
 \sum_{q \in K} ( \varepsilon(i, J_{q})
  -  \varepsilon(i, I_{q}) )
 = 
\begin{cases}
 +1 & (i \in \{ j_{1}, j_{2}, \ldots, j_{m} \}), \\
 -1 & (i \in \{ i_{1}, i_{2}, \ldots, i_{m} \}), \\
 0 & (\text{\rm otherwise}). \\
\end{cases}
\label{BkVertCq}
\end{align}
\end{lemma}
\begin{proof}
Let $E_{\rm in}(i)$ and $E_{\rm out}(i)$ denote the (multi)sets of the edges of 
$\tilde C$ that enter and leave $i$, respectively.
First we consider the case of $i=j_{r}$, where $1 \leq r \leq m$.
The last edge of $L_{r}\sp{(1)}$ ($\subseteq E_{1}$) enters $j_{r}$ 
and the first edge of $L_{r}\sp{(2)}$ ($\subseteq E_{2}\sp{\circ}$) leaves $j_{r}$.
The vertex $j_{r}$ may be contained in the middle other paths $L_{s}\sp{(k)}$
with $s \neq r$, but in this case, the two consecutive edges connected at $j_{r}$ 
on $L_{s}\sp{(k)}$ belong to the same class ($E_{1}$ or $E_{2}\sp{\circ}$).
Therefore, we have
\begin{equation} \label{jrEinEout}
|E_{1} \cap E_{\rm in}(j_{r})| = |E_{1} \cap E_{\rm out}(j_{r})|+1,
\ \ 
|E_{2}\sp{\circ} \cap E_{\rm in}(j_{r})| = |E_{2}\sp{\circ} \cap E_{\rm out}(j_{r})|-1.
\end{equation}
Suppose that $C_{q}$ passes through $j_{r}$,
and let $e_{\rm in} \in E_{\rm in}(j_{r})$ and 
$e_{\rm out} \in E_{\rm out}(j_{r})$ be the edges of $C_{q}$ 
that enter and leave $j_{r}$, respectively.
If $e_{\rm in} \in E_{1}$
and $e_{\rm out} \in E_{2}\sp{\circ}$, then $j_{r} \in J_{q}$
(i.e., $\varepsilon(j_{r}, J_{q}) = 1$).
Symmetrically, 
if $e_{\rm in} \in E_{2}\sp{\circ}$
and $e_{\rm out} \in E_{1}$, then $j_{r} \in I_{q}$.
If
$\{ e_{\rm in}, e_{\rm out} \} \subseteq E_{1}$
or
$\{ e_{\rm in}, e_{\rm out} \} \subseteq E_{2}\sp{\circ}$,
then
$j_{r} \notin I_{q} \cup J_{q}$.
The vertex $j_{r}$ may be contained in several $C_{q}$, but 
it follows from \eqref{jrEinEout} that
$ \sum_{q \in K} \varepsilon(j_{r}, J_{q})
  -  \sum_{q \in K} \varepsilon(j_{r}, I_{q}) = +1$,
as in \eqref{BkVertCq}.
The case of $i=i_{r}$ can be treated in a similar manner with
\begin{equation*} 
|E_{1} \cap E_{\rm in}(i_{r})| = |E_{1} \cap E_{\rm out}(i_{r})|-1,
\quad 
|E_{2}\sp{\circ} \cap E_{\rm in}(i_{r})| = |E_{2}\sp{\circ} \cap E_{\rm out}(i_{r})|+1.
\end{equation*}
Also the remaining case of $i \neq i_{r}, j_{r}$ can treated similarly using
\begin{equation*} 
|E_{1} \cap E_{\rm in}(i)| = |E_{1} \cap E_{\rm out}(i)|,
\quad 
|E_{2}\sp{\circ} \cap E_{\rm in}(i)| = |E_{2}\sp{\circ} \cap E_{\rm out}(i)|.
\end{equation*}
Thus \eqref{BkVertCq} is proved.
\qedJIAM
\end{proof}

The addition of \eqref{L2ineqCycagainCq}
over $q \in K$ gives
\begin{equation} \label{L2ineqCycagainCqsum}
\sum_{q \in K} (  x(J_{q}) - x(I_{q}) ) \leq \sum_{q \in K}  \gamma(C_{q}).
\end{equation}
For the left-hand side of \eqref{L2ineqCycagainCqsum},
we have
\[
\sum_{q \in K} ( x(J_{q}) - x(I_{q}) ) 
 = x(\{ j_{1}, \ldots, j_{m} \}) - x(\{ i_{1}, \ldots, i_{m} \}) 
\]
by Lemma~\ref{LMcntCyc},
while the right-hand side of \eqref{L2ineqCycagainCqsum}
is equal to 
$\lambda( i_{1}, j_{1}, \ldots, i_{m}, j_{m})$ by \eqref{gammaCqHatlamda}.
This show that each inequality in \eqref{pJpIcycG2}
can be derived from some of the inequalities in \eqref{L2ineqCycagain}.
From this we can conclude that 
$S \supseteq \hat S$,
completing the proof of Theorem~\ref{THl2polydescCyc}.

\begin{remark} \rm \label{RMsuppfn}
The {\em supporting function} of a polyhedron $Q \subseteq \RR\sp{n}$
(in general) 
is defined for all $u \in \RR\sp{n}$ by
\begin{equation*}  
\eta(Q,u) =  \sup \{ \langle u, x \rangle \mid x \in Q \} .
\end{equation*}
Then $Q$ is described by a system of inequalities
$\langle u, x \rangle  \leq \eta(Q,u)$
with a suitable finite set of $u$'s.
If $Q$ is a full-dimensional bounded polyhedron, the vectors $u$
will be the normal vectors of all facets of $Q$. 
It is known \cite[Section 2.2, Exercise 8]{Gru03}
that the supporting function of a Minkowski sum 
is given by the sum of the respective supporting functions:
\begin{equation*}  
\eta(Q_{1}+ Q_{2},u) = \eta(Q_{1},u) + \eta(Q_{2},u). 
\end{equation*}
This shows that
the bounding constant $\gamma_{IJ}$
in \eqref{L2Pdesc} 
for \LL-convex polyhedron
$P = P_{1}+P_{2}$ 
is given as
\begin{equation}  \label{gammaL2}
 \gamma_{IJ} = \eta(P,\unitvec{J} - \unitvec{I}) 
  = \eta(P_{1},\unitvec{J} - \unitvec{I}) + \eta(P_{2},\unitvec{J} - \unitvec{I}). 
\end{equation}
Let $S = S_{1}+S_{2}$ be an \LL-convex set.
Since $f = \delta_{S}\sp{\bullet}$ is given by 
\[
f (u) 
= \sup\{  \langle u, x \rangle - \delta_{S}(x)  \mid x \in \ZZ\sp{n} \}
= \sup\{  \langle u, x \rangle  \mid x \in S \}
\qquad ( u \in \ZZ\sp{n}),
\]
the function $f$ is nothing but the supporting function $\eta(P,\cdot)$ 
of $P=\overline{S}$ restricted to integral vectors.
Similarly, $f_{k}$ is essentially 
the same as the supporting function
$\eta(P_{k},\cdot)$ of $P_{k}=\overline{S_{k}}$.
Furthermore, the relation
$f= f_{1} + f_{2}$
evaluated at $u = \unitvec{J} - \unitvec{I}$
corresponds to 
\eqref{gammaL2}.
\finbox
\end{remark}

\section{Elimination approach to \LL-convex sets}
\label{SCelimappr}

In this section we give an algebraic proof of Theorem~\ref{THl2polydescCyc}
by means of the Fourier--Motzkin elimination.

\subsection{Fourier--Motzkin elimination}
\label{SCproofL2fourmotzmethod}

The procedure of Fourier--Motzkin elimination \cite{Sch86,Zie07} 
is described here for a (general) system of inequalities
\begin{equation}\label{ineqApb}
 A u \leq b
\end{equation}
in $u \in \RR\sp{n}$.
It is assumed that the matrix $A$ has entries from $\{ -1,0,+1 \}$,
which is the case with our system 
\eqref{QP1descPrf2}--\eqref{QP2descPrf2}
in Section~\ref{SCproofL2ineq}.
Let $R$ denote the row set of
$A$, that is,
$A = ( a_{ij} \mid i \in R, j \in \{ 1,2,\ldots, n \})$.
The $i$th row vector of $A$ is denoted by $a_{i}$ for $i \in R$.
By assumption, we have $a_{ij} \in \{ -1,0,+1 \}$
for all $i$ and $j$.

The Fourier--Motzkin elimination for (\ref{ineqApb}) goes as follows.
According to the value of coefficient $a_{i1}$ of the first variable $u_{1}$,
we partition $R$ into three disjoint parts $(R_{1}^{+},R_{1}^{-},R_{1}^{0})$ as
\begin{align*}
 R_{1}^{+} &= \{ i \in R \mid a_{i1} = +1 \},   
\\
 R_{1}^{-} &= \{ i \in R \mid a_{i1} = -1 \},
\\ 
 R_{1}^{0} &= \{ i \in R \mid a_{i1} = 0 \},
\end{align*}
and decompose (\ref{ineqApb}) into three parts as
\begin{align}
 a_{i}  u \leq b_{i} &
\qquad (i \in R_{1}^{+}),
\label{ineqApbi+}
\\
 a_{i}  u \leq b_{i} &
\qquad (i \in R_{1}^{-}) ,
\label{ineqApbi-}
\\
 a_{i}  u \leq b_{i} &
\qquad (i \in R_{1}^{0}) .
\label{ineqApbi0}
\end{align}
For all possible combinations of $i \in R_{1}^{+}$ and $k \in R_{1}^{-}$,
we add the inequality for $i$ in (\ref{ineqApbi+}) 
and the inequality for $k$ in (\ref{ineqApbi-}) 
to generate
\begin{equation}\label{FMp2pn}
 (a_{i} + a_{k}) u \leq b_{i} + b_{k} 
\qquad (i \in R_{1}^{+},\; k \in R_{1}^{-}) .
\end{equation}
Since $a_{i1} + a_{k1}= 0$ for all
$i \in R_{1}^{+}$ and $k \in R_{1}^{-}$,
the newly generated inequalities in  (\ref{FMp2pn}) are free from the variable $u_{1}$.
We have thus eliminated $u_{1}$ and obtained
a system of inequalities 
in $(u_{2},\ldots,u_{n})$ consisting of (\ref{ineqApbi0}) and (\ref{FMp2pn}).

For the variable $u_{1}$ we obtain
\begin{equation}\label{FMp1}
 \max_{k \in R_{1}^{-}} \left\{ \sum_{j=2}^n a_{kj}u_{j} - b_{k} \right\} 
 \leq u_{1} \leq  
 \min_{i \in R_{1}^{+}} \left\{ b_{i} - \sum_{j=2}^n a_{ij}u_{j}  \right\} 
\end{equation}
from (\ref{ineqApbi+}) and (\ref{ineqApbi-}).
Once $(u_{2},\ldots,u_{n})$ 
is found,
$u_{1}$ can easily be obtained from (\ref{FMp1}).
Note that the interval described by (\ref{FMp1}) is nonempty 
as long as $(u_{2},\ldots,u_{n})$ satisfies (\ref{FMp2pn}).
It is understood that
the maximum over the empty set is equal to $-\infty$ 
and the minimum over the empty set is equal to $+\infty$.

It is emphasized that the derived system of inequalities 
in $(u_{1}, u_{2},\ldots,u_{n})$ 
consisting of  (\ref{ineqApbi0}), (\ref{FMp2pn}), and (\ref{FMp1})
is in fact equivalent to the original system consisting of
(\ref{ineqApbi+}), (\ref{ineqApbi-}), and (\ref{ineqApbi0}).
In particular, $(u_{1}, u_{2},\ldots,u_{n})$ satisfies 
(\ref{ineqApbi+}), (\ref{ineqApbi-}), and (\ref{ineqApbi0})
if and only if $(u_{2},\ldots,u_{n})$ satisfies 
(\ref{ineqApbi0}) and  (\ref{FMp2pn}),
and $u_{1}$ satisfies (\ref{FMp1}).
In geometric terms, the projection of 
the polyhedron
$Q = \{ u \in \RR\sp{n} \mid  A u \leq b \}$
to the space of $(u_{2},u_{3},\ldots,u_n)$
is described  by (\ref{ineqApbi0}) and (\ref{FMp2pn}).

The Fourier--Motzkin method applies the above procedure 
recursively to eliminate variables $u_{1},u_{2}, \ldots,u_{n-1}$.
At the stage when 
the variables $u_{1},u_{2}, \ldots,   \allowbreak   u_{\ell-1}$
have been eliminated,
we obtain a system of inequalities 
to describe the projection of $Q$ to the space of 
$(u_{\ell},u_{\ell+1},\ldots, \allowbreak u_n)$.
At the end of the process, a single inequality in $u_{n}$ of the form (\ref{FMp1}) results.
Then we can determine $(u_{1}, u_{2},\ldots,u_{n})$ 
in the reverse order $u_{n}, u_{n-1},\ldots,u_{1}$.

\subsection{Proof of Theorem~\ref{THl2polydescCyc} 
by Fourier--Motzkin elimination}
\label{SCproofL2fourmotz}

We present the proof for an \LL-convex polyhedron,
from which the proof for an \LL-convex set
follows immediately (see Remark~\ref{RMproofL2set}).
We prefer to work with polyhedra
rather than discrete sets
because of the geometric flavor of the proof 
with an interpretation by projections.

\subsubsection{Inequality systems}
\label{SCproofL2ineq}

Let $P = P_{1} + P_{2}$
be an \LL-convex polyhedron,
where $P_{1}$ and $P_{2}$ are
L-convex polyhedra.
Define 
\begin{equation} \label{l2Qdef}
 Q = \{ (x,y,z) \in \RR\sp{3n} \mid 
     x = y + z, \  y \in P_{1}, \  z \in P_{2} \}.
\end{equation}
Then 
$P  = \{ x \in \RR\sp{n} \mid (x,y,z) \in Q \}$, 
which is a projection of $Q$.
By Theorem~\ref{THlpolydesc} 
we have
\begin{align} 
P_{1} &= \{ y \in \RR\sp{n} \mid 
  y_{j} - y_{i} \leq \gamma_{ij}\sp{(1)} \ \ ((i,j) \in E_{1}) \},
\label{P1descPrf} 
\\
P_{2} &= \{ z \in \RR\sp{n} \mid 
  z_{i} - z_{j} \leq \gamma_{ji}\sp{(2)} \ \ ((j,i) \in E_{2}) \},
\label{P2descPrf} 
\end{align}
where  $E_{1}, E_{2} \subseteq (N \times N) \setminus \{ (i,i) \mid i \in N \}$, 
$\gamma_{ij}\sp{(1)} \in \RR$ (finite-valued)
for all $(i,j) \in E_{1}$,
and $\gamma_{ji}\sp{(2)} \in \RR$ 
for all $(j,i) \in E_{2}$.
Then $Q$ is described by
\begin{align} 
&  x_{i} = y_{i} + z_{i}
 \quad (i \in N),
\label{QP1P2descPrf} 
\\ &
y_{j} - y_{i} \leq \gamma_{ij}\sp{(1)} 
 \quad ((i,j) \in E_{1}),
\label{QP1descPrf} 
\\ &
 z_{i} - z_{j} \leq \gamma_{ji}\sp{(2)} 
\quad ((j,i) \in E_{2}).
\label{QP2descPrf} 
\end{align}

We can eliminate $z$ by substituting
$z_{i} = x_{i}  - y_{i}$ into \eqref{QP2descPrf},  to obtain
\begin{align} 
& 
y_{j} - y_{i} \leq \gamma_{ij}\sp{(1)} 
 \quad ((i,j) \in E_{1}),
\label{QP1descPrf2} 
\\ &
 y_{j} - y_{i} + x_{i} - x_{j} \leq \gamma_{ji}\sp{(2)} 
\quad ((j,i) \in E_{2}).
\label{QP2descPrf2} 
\end{align}
Let $\hat Q$ denote the set of $(x,y)$ satisfying these inequalities, that is,
\begin{equation} \label{l2Qhat}
 \hat Q = \{ (x,y) \in \RR\sp{2n} \mid 
   \eqref{QP1descPrf2},    \eqref{QP2descPrf2}  \} .
\end{equation}
We denote by
$\hat Q_{\ell}$
the projection of $\hat Q$ to the space of
$(x_{1}, x_{2}, \ldots, x_{n}, 
\allowbreak y_{\ell}, y_{\ell+1}, \allowbreak \ldots, \allowbreak y_{n})$
for $\ell=1,2,\ldots,n+1$.
We have $\hat Q_{1} = \hat Q$
and $\hat Q_{n+1} = P$.
By eliminating $y$ from 
\eqref{QP1descPrf2} and \eqref{QP2descPrf2},
we can obtain the polyhedral description of $P$.
The Fourier--Motzkin elimination procedure enables us
to carry out this task.

\begin{example} \rm \label{EXl2dim4polyFM}
Consider the \LL-convex polyhedron $P$
associated with the \LL-convex set in Example~\ref{EXl2dim4polyG}.
The inequalities in \eqref{QP1P2descPrf}--\eqref{QP2descPrf} are given by
\begin{align*} 
&  
x_{1} = y_{1} + z_{1}, \quad 
x_{2} = y_{2} + z_{2}, \quad 
x_{3} = y_{3} + z_{3}, \quad 
x_{4} = y_{4} + z_{4},  
\nonumber
\\ &
 y_{2} - y_{1} \leq 3, \quad
 y_{3} - y_{2} \leq 5, \quad
 y_{4} - y_{3} \leq 8, \quad
 y_{1} - y_{4} \leq 7, 
\\ &
 z_{1} - z_{3} \leq 2, \quad
 z_{4} - z_{1} \leq 1, \quad
 z_{2} - z_{3} \leq 3, \quad
 z_{4} - z_{2} \leq 5, \quad
 z_{3} - z_{4} \leq 2,
\nonumber
\end{align*}
and those in  
\eqref{QP1descPrf2} and \eqref{QP2descPrf2} are given by
\begin{align*} 
&  
 y_{2} - y_{1} \leq 3, \quad
 y_{3} - y_{2} \leq 5, \quad
 y_{4} - y_{3} \leq 8, \quad
 y_{1} - y_{4} \leq 7, 
\nonumber
\\ &
  y_{3} - y_{1} + x_{1} - x_{3}  \leq 2, \quad
  y_{1} - y_{4} + x_{4} - x_{1}   \leq 1, 
\\ &
  y_{3} - y_{2} + x_{2} - x_{3}  \leq 3, \quad
  y_{2} - y_{4} + x_{4} - x_{2}  \leq 5, \quad
  y_{4} - y_{3} + x_{3} - x_{4}  \leq 2.
\nonumber
\end{align*}
By eliminating $(y_{1},y_{2}, y_{3}, y_{4})$, 
we arrive at a system of inequalities for $P$,
which is given by \eqref{l2dim4polyGx} in Example~\ref{EXl2desccyc}.
\finbox
\end{example}

\begin{remark} \rm \label{RMproofL2set}
Theorem~\ref{THl2polydescCyc}(1) for an \LL-convex set
follows from the statement (2) for an \LL-convex polyhedron. 
Let $S=S_{1} + S_{2}$,
where
$S_{1}$ and $S_{2}$ are L-convex sets, 
and denote their convex hulls by
$P = \overline{S}$, $P_{1} = \overline{S_{1}}$, and
$P_{2} = \overline{S_{2}}$.
Since 
$\overline{S_{1}+ S_{2}} = \overline{S_{1}} + \overline{S_{2}}$
in general
(\cite[Proposition 3.17]{Mdcasiam}),
we have $P = P_{1}+P_{2}$, where
$P_{1}$ and $P_{2}$ are L-convex polyhedra.
We have
$S = P \cap \ZZ\sp{n}$
by integral convexity of an \LL-convex set (\cite[Theorem~8.42]{Mdcasiam}),
and hence \eqref{L2PdescCyc} implies \eqref{L2SdescCyc}.
\finbox
\end{remark}

\subsubsection{Polyhedral description of the projection $\hat Q_{\ell}$}

In Section~\ref{SCgraphrep} we have 
introduced an inequality 
\begin{align}
x(\{ j_{1}, \ldots, j_{m} \}) - x(\{ i_{1}, \ldots, i_{m} \}) 
 & \leq  \gamma(C)
 \label{L2ineqCycFM}
\end{align}
associated with a mixed cycle $C$
with reference to its break vertices
(see \eqref{L2ineqCyc} and \eqref{dircycBk}).
This inequality is used in Theorem~\ref{THl2polydescCyc} to describe $P = \hat Q_{n+1}$.
For the description of $\hat Q_{\ell}$ with general $\ell$
(to be given in Proposition~\ref{PRdescQell}), 
we need a similar inequality associated with a path.
For a path $L$ in $G_{*}$,
we define its break vertices 
$( i_{1}, j_{1}, \ldots, i_{m+1}, j_{m+1})$
(see below) and 
an inequality associated with $L$ by
\begin{equation}
y_{j} - y_{i} + x(\{ j_{1}, \ldots, j_{m} \}) - x(\{ i_{2}, \ldots, i_{m+1} \}) 
 \leq  \gamma(L),
 \label{L2ineqPathFM}
\end{equation}
where $i$ and $j$ are the initial and terminal vertices of $L$
and
$\gamma(L) = \sum_{e \in L} \gamma(e)$ is the length of $L$
with respect to $\gamma$ in \eqref{edgelenG}.
It is noted that the first and last break vertices, $i_{1}$ and $j_{m+1}$,
do not appear in  \eqref{L2ineqPathFM}.
The inequality \eqref{L2ineqPathFM} generalizes
$y_{j} - y_{i} \leq \gamma_{ij}\sp{(1)}$ 
in \eqref{QP1descPrf2}
and
$ y_{j} - y_{i} + x_{i} - x_{j} \leq \gamma_{ji}\sp{(2)}$
in \eqref{QP2descPrf2}
(see Example~\ref{EXl2graphpath}).

For a (simple or non-simple) path 
$L = e_{1} e_{2} \cdots e_{\ell-1} e_{\ell}$
from $i$ to $j$ $(\neq i)$ in $G_{*}$,
the break vertices 
$( i_{1}, j_{1}, \ldots, i_{m+1}, j_{m+1})$
of $L$ are defined as follows.
The index $m$ is equal to the number of $E_{2}\sp{\circ}$-intervals in $L$;
in particular, $m=0$ if $L$ has no edge from $E_{2}\sp{\circ}$.
We define $i_{1}:= i$ and $j_{m+1} := j$.
Suppose that $L$ has edges from $E_{2}\sp{\circ}$, that is, $m \geq 1$.
Let
$j_{1}$ be the vertex at which 
the first $E_{2}\sp{\circ}$-interval starts;
we have $j_{1}=i_{1}$ 
if the first edge $e_{1}$ belongs to $E_{2}\sp{\circ}$.
For each $r=1,2,\ldots,m$,
the $r$-th $E_{2}\sp{\circ}$-interval 
starts at vertex $j_{r}$ and ends at vertex $i_{r+1}$:
\begin{align} 
L = & \
 |i_{1}| \underbrace{e_{1} \cdots e_{t_{1}}}_{E_{1}} |j_{1}|
    \underbrace{e_{t_{1}+1} \cdots e_{s_{2}-1}}_{E_{2}\sp{\circ}}
 |i_{2}| \underbrace{e_{s_{2}} \cdots e_{t_{2}}}_{E_{1}} |j_{2}| 
  \underbrace{e_{t_{2}+1} \cdots e_{s_{3}-1}}_{E_{2}\sp{\circ}} |i_{3}| e_{s_{3}} \cdots 
\nonumber \\ & \
\cdots e_{s_{m}-1}  
 |i_{m}| \underbrace{e_{s_{m}} \cdots e_{t_{m}}}_{E_{1}}
 |j_{m}| \underbrace{e_{t_{m}+1} \cdots e_{s_{m+1}-1}}_{E_{2}\sp{\circ}} 
 |i_{m+1}| \underbrace{e_{s_{m+1}} \cdots e_{\ell}}_{E_{1}}
 |j_{m+1}| .
 \label{dirpathBk}
\end{align}
If
$e_{1} \in E_{2}\sp{\circ}$,
the first interval 
$e_{1} \cdots e_{t_{1}}$ 
of $E_{1}$-edges
is empty, in which case
we have $t_{1}=0$.
Symmetrically,
if $e_{\ell} \in E_{2}\sp{\circ}$,
the last interval 
$e_{s_{m+1}} \cdots e_{\ell}$ 
of $E_{1}$-edges
is empty, in which case
we have $s_{m+1}=\ell + 1$.
With this convention we have
$0 \leq t_{1}$, \ 
$s_{r} \leq  t_{r}$ $(r=2,3,\ldots,m)$, \ $s_{m+1} \leq \ell +1$
for $E_{1}$-intervals,  and
$t_{r} +1 \leq s_{r+1} -1$
$(r=1,2,\ldots,m)$ 
for $E_{2}\sp{\circ}$-intervals.

The inequality \eqref{L2ineqPathFM}
for a path $L$ is illustrated in the following example,
whereas 
the inequality \eqref{L2ineqCycFM}  for a mixed cycle $C$ 
has been illustrated in Example~\ref{EXl2graphcyc}.

\begin{example} \rm \label{EXl2graphpath}
The inequality \eqref{L2ineqPathFM}
for a path $L$ is illustrated for some cases,
\begin{itemize}
\item 
If $L$ consists of a single edge $(i,j)_{1} \in E_{1}$,
then $m=0$, $(i_{1}, j_{1})=(i,j)$, 
and the inequality \eqref{L2ineqPathFM} reduces to  
$y_{j} - y_{i} \leq \gamma_{ij}\sp{(1)}$ 
in \eqref{QP1descPrf2}.

\item 
If $L$ is a single edge $(i,j)_{2}$ from $E_{2}\sp{\circ}$,
then $m=1$,
$(i_{1}, j_{1})=(i,i)$, 
$(i_{2}, j_{2})=(j,j)$, 
and \eqref{L2ineqPathFM} reduces to  
$ y_{j} - y_{i} + x_{i} - x_{j} \leq \gamma_{ji}\sp{(2)}$
in \eqref{QP2descPrf2}.

\item 
For a simple path
$L = (i,1)_{2} (1,2)_{1} (2,j)_{2}$
with $m=2$,
$(i_{1}, j_{1})=(i,i)$,
$(i_{2}, j_{2})=(1,2)$, and
$(i_{3}, j_{3})=(j,j)$,
the inequality \eqref{L2ineqPathFM} is given by
\begin{equation*} 
 y_{j} - y_{i} + x_{i} + x_{2} - x_{1} - x_{j} \leq
 \gamma_{1i}\sp{(2)} + \gamma_{12}\sp{(1)} + \gamma_{j2}\sp{(2)} .
\end{equation*}

\item 
For a simple path
$L = (i,1)_{1} (1,2)_{2} (2,3)_{1} (3,j)_{2}$
with $m=2$,
$(i_{1}, j_{1})=(i,1)$,
$(i_{2}, j_{2})=(2,3)$, and
$(i_{3}, j_{3})=(j,j)$,
the inequality \eqref{L2ineqPathFM} is given by
\begin{equation*} 
 y_{j} - y_{i} + x_{1} + x_{3} - x_{2} - x_{j} \leq
 \gamma_{i1}\sp{(1)} + \gamma_{21}\sp{(2)} + 
 \gamma_{23}\sp{(1)} + \gamma_{j3}\sp{(2)} .
\end{equation*}

\item
For a non-simple path 
$L = (i,1)_{1} (1,2)_{2} (2,1)_{1} (1,j)_{2}$
with $m=2$,
$(i_{1}, j_{1})=(i,1)$, 
$(i_{2}, j_{2})=(2,1)$, and
$(i_{3}, j_{3})=(j,j)$,
the inequality \eqref{L2ineqPathFM} is given by
\begin{equation} \label{FMcoef2}
 y_{j} - y_{i} + 2 x_{1} - x_{2} - x_{j} 
\leq 
\gamma_{i1}\sp{(1)} + \gamma_{21}\sp{(2)} + \gamma_{21}\sp{(1)}  + \gamma_{j1}\sp{(2)},
\end{equation}
in which $x_{1}$ appears with coefficient $2$.
\finbox
\end{itemize}
\end{example}

Recall notations
$\hat Q$ in \eqref{l2Qhat}
and its projection 
$\hat Q_{\ell}$
to the space of
$(x,y_{[\ell]}) \in  \RR\sp{2n - \ell +1}$,
where $y_{[\ell]} := (y_{\ell}, y_{\ell+1}, \ldots, y_{n})$.
The following proposition states that
each $\hat Q_{\ell}$ can be described by \eqref{L2ineqCycFM} 
for a suitably chosen family $\mathcal{C}_{\ell}$ of simple mixed cycles in $G_{*}$ and 
\eqref{L2ineqPathFM} 
for a suitably chosen family $\mathcal{P}_{\ell}$ of simple paths in $G_{*}$ 
connecting $i \in N_{\ell}$ to $j \in N_{\ell}$,
where
$N_{\ell}:= \{ \ell, \ell+1, \ldots, n \}$.

\begin{proposition} \label{PRdescQell}
Let $1 \leq \ell \leq n+1$.  We have
\begin{align}
\hat Q_{\ell} = 
 \{ (x,y_{[\ell]}) \mid \ &  
\mbox{\rm \eqref{L2ineqCycFM} 
for every (simple mixed cycle) $C \in \mathcal{C}_{\ell}$,}
\nonumber \\ & 
\mbox{\rm 
\eqref{L2ineqPathFM} for every (simple path) $L \in \mathcal{P}_{\ell}$} 
 \}
 \label{L2QelldescFM}
\end{align}
for a family $\mathcal{C}_{\ell}$
of simple mixed cycles
in $G_{*}$
and a family $\mathcal{P}_{\ell}$
of simple paths 
in $G_{*}$
from a vertex in $N_{\ell}$ to another vertex in $N_{\ell}$.
\finboxARX
\end{proposition}

In Section~\ref{SCproofinduct}
we prove Proposition~\ref{PRdescQell}
by induction on $\ell$ 
on the basis of the Fourier--Motzkin elimination.
The expression \eqref{L2QelldescFM}
for $\ell = 1$ is true 
with the choice of
$\mathcal{C}_{1} =\emptyset$
and
$\mathcal{P}_{1} =E_{1} \cup E_{2}\sp{\circ}$,
because 
$\hat Q_{1} = \hat Q$
is described  by \eqref{QP1descPrf2} and \eqref{QP2descPrf2},
which are special cases of \eqref{L2ineqPathFM} 
as explained in Example~\ref{EXl2graphpath}.
The expression \eqref{L2QelldescFM} for $\ell = n+1$
implies the expression \eqref{L2PdescCyc}
in Theorem~\ref{THl2polydescCyc},
since 
$\hat Q_{n+1} = P$ and 
$\mathcal{P}_{n+1} = \emptyset$
(by $N_{n+1} = \emptyset$).
Before describing the general induction step, 
we illustrate in Section~\ref{SCelimy1} the first step to eliminate $y_{1}$.

\begin{remark} \rm \label{RMbKpathcycle}
The definition of break vertices for a path in \eqref{dirpathBk} 
is consistent with that
for a mixed cycle in \eqref{dircycBk}.
A mixed cycle 
$C = e_{1} e_{2} \cdots e_{\ell-1} e_{\ell}$
with $e_{1} \in E_{1}$ and $e_{\ell} \in E_{2}\sp{\circ}$
may be identified with a path
$L = e_{1} e_{2} \cdots e_{\ell-1} e_{\ell}$
from $i$ to $j$,
where $i$ is the tail of $e_{1}$ and $j$ is a copy of $i$.
If $( i_{1}, j_{1}, \ldots, i_{m+1}, j_{m+1})$
denotes the break vertices of $L$, 
the break vertices of $C$ are given by 
$( i_{1}, j_{1}, \ldots, i_{m}, j_{m})$.
Moreover, the inequality \eqref{L2ineqCycFM} for $C$
coincides (formally) with \eqref{L2ineqPathFM} for $L$
with the understanding of
$y_{j}=y_{i}$ and $i_{m+1} = j = i = i_{1}$.
\finbox
\end{remark}

\subsubsection{Eliminating $y_{1}$}
\label{SCelimy1}

Our proof of Proposition~\ref{PRdescQell}
is based on 
the Fourier--Motzkin elimination applied to
\eqref{QP1descPrf2}--\eqref{QP2descPrf2}.
In this section we describe the first step to eliminate $y_{1}$.

To eliminate $y_{1}$ we classify the inequalities into six groups as
\begin{align} 
& 
 y_{1} - y_{i} \leq \gamma_{i1}\sp{(1)} 
 \quad ((i,1) \in E_{1}),
\label{QP1+descPrf3} 
\\ &
 y_{j} - y_{1} \leq \gamma_{1j}\sp{(1)} 
 \quad ((1,j) \in E_{1}),
\label{QP1-descPrf3} 
\\ &
 y_{j} - y_{i} \leq \gamma_{ij}\sp{(1)} 
 \quad (i \ne 1, j \ne 1, (i,j) \in E_{1}),
\label{QP10descPrf3} 
\\ &
 y_{1} - y_{i} +  x_{i} - x_{1} \leq \gamma_{1i}\sp{(2)} 
\quad ((1,i) \in E_{2}),
\label{QP2+descPrf3} 
\\ &
 y_{j} - y_{1} + x_{1} - x_{j} \leq \gamma_{j1}\sp{(2)} 
\quad ((j,1) \in E_{2}),
\label{QP2-descPrf3} 
\\ &
 y_{j} - y_{i} + x_{i} - x_{j} \leq \gamma_{ji}\sp{(2)} 
\quad (i \ne 1, j \ne 1,(j,i) \in E_{2}).
\label{QP20descPrf3} 
\end{align}
Note that $y_{1}$ appears with coefficient ``$+1$'' 
in \eqref{QP1+descPrf3} and \eqref{QP2+descPrf3},
and with coefficient ``$-1$'' 
in \eqref{QP1-descPrf3} and \eqref{QP2-descPrf3},
while $y_{1}$ does not appear in \eqref{QP10descPrf3} and \eqref{QP20descPrf3}.
Thus there are four types of combinations to eliminate $y_{1}$, namely,
\eqref{QP1+descPrf3} $+$ \eqref{QP1-descPrf3},
\eqref{QP2+descPrf3} $+$ \eqref{QP2-descPrf3},
\eqref{QP1+descPrf3} $+$ \eqref{QP2-descPrf3},
and
\eqref{QP2+descPrf3} $+$ \eqref{QP1-descPrf3}.

\begin{itemize}
\item
\eqref{QP1+descPrf3} $+$ \eqref{QP1-descPrf3}:
This combination gives rise to 
\begin{equation*} 
y_{j} - y_{i} \leq \gamma_{i1}\sp{(1)} + \gamma_{1j}\sp{(1)} .
\end{equation*}
This inequality coincides with
\eqref{L2ineqPathFM} for 
$L = (i,1)_{1} (1,j)_{1}$,
for which $m=0$ and $(i_{1}, j_{1})=(i,j)$.

\item
\eqref{QP2+descPrf3} $+$ \eqref{QP2-descPrf3}:
This combination gives rise to 
\begin{equation*}  
 y_{j} - y_{i} + x_{i} - x_{j} \leq \gamma_{j1}\sp{(2)} + \gamma_{1i}\sp{(2)} .
\end{equation*}
This inequality coincides with
\eqref{L2ineqPathFM} for 
$L = (i,1)_{2} (1,j)_{2}$,
for which $m=1$ and $(i_{1}, j_{1})=(i,i)$ and $(i_{2}, j_{2})=(j,j)$.

\item
\eqref{QP1+descPrf3} $+$ \eqref{QP2-descPrf3}:
The addition of \eqref{QP1+descPrf3} and \eqref{QP2-descPrf3} generates
\begin{equation*} 
 y_{j} - y_{i} + x_{1} - x_{j} \leq 
 \gamma_{i1}\sp{(1)} + \gamma_{j1}\sp{(2)} .
\end{equation*}
If $i \neq j$, this inequality coincides with
\eqref{L2ineqPathFM} for 
$L = (i,1)_{1} (1,j)_{2}$,
for which $m=1$ and $(i_{1}, j_{1})=(i,1)$ 
and $(i_{2}, j_{2})=(j,j)$.
If $i=j$, the above inequality reduces to 
\begin{equation*}  
 x_{1} - x_{i} \leq 
 \gamma_{i1}\sp{(1)} + \gamma_{i1}\sp{(2)} ,
\end{equation*}
which coincides with
\eqref{L2ineqCycFM} for 
$C = (i,1)_{1} (1,i)_{2}$,
for which $m=1$ and $(i_{1}, j_{1})=(i,1)$.

\item
\eqref{QP2+descPrf3} $+$ \eqref{QP1-descPrf3}:
The addition of \eqref{QP2+descPrf3} and \eqref{QP1-descPrf3} generates
\begin{equation*} 
 y_{j} - y_{i} + x_{i} - x_{1} \leq 
 \gamma_{1j}\sp{(1)} + \gamma_{1i}\sp{(2)} .
\end{equation*}
If $i \neq j$, this inequality coincides with
\eqref{L2ineqPathFM} for 
$L = (i,1)_{2} (1,j)_{1}$,
for which $m=1$ and $(i_{1}, j_{1})=(i,i)$ and $(i_{2}, j_{2})=(1,j)$.
If $i=j$, the above inequality reduces to 
\begin{equation*} 
  x_{i} - x_{1} \leq 
  \gamma_{1i}\sp{(1)} + \gamma_{1i}\sp{(2)} ,
\end{equation*}
which coincides with
\eqref{L2ineqCycFM} for 
$C = (i,1)_{2} (1,i)_{1}= (1,i)_{1}(i,1)_{2}$,
for which $m=1$ and $(i_{1}, j_{1})=(1,i)$.
\end{itemize}

Thus the system of inequalities for 
$(y_{2}, y_{3}, \ldots, y_{n};  \allowbreak  x_{1}, x_{2}, \ldots, x_{n})$ 
is given by 
\eqref{QP10descPrf3}, \eqref{QP20descPrf3},
and the inequalities derived above.
The interval of $y_{1}$ is given by \eqref{FMp1} as 
\begin{align*} 
&  \max \left\{  
   \max_{(1,j) \in E_{1}} \{ -\gamma_{1j}\sp{(1)} + y_{j}  \} ,
   \max_{(j,1) \in E_{2}} \{ -\gamma_{j1}\sp{(2)} + y_{j} + x_{1} - x_{j}  \} 
 \right\} 
\\ &  \leq y_{1} \leq  
 \min \left\{  
  \min_{(i,1) \in E_{1}} \{ \gamma_{i1}\sp{(1)} + y_{i}  \} ,
  \min_{(1,i) \in E_{2}} \{ \gamma_{1i}\sp{(2)} + y_{i} - x_{i} + x_{1}  \} 
 \right\}, 
\end{align*}
although this expression plays no role in the proof 
of Proposition~\ref{PRdescQell}.

\subsubsection{Proof of Proposition~\ref{PRdescQell}}
\label{SCproofinduct}

We prove \eqref{L2QelldescFM} in Proposition~\ref{PRdescQell}
by induction on $\ell=1,2,\ldots,n+1$.
As already mentioned right after Proposition~\ref{PRdescQell},
the expression \eqref{L2QelldescFM}
for $\ell = 1$ is true 
with the choice of
$\mathcal{C}_{1} =\emptyset$
and
$\mathcal{P}_{1} =E_{1} \cup E_{2}\sp{\circ}$.

As the induction hypothesis, suppose that 
\eqref{L2QelldescFM} is true for $\ell$, where $1 \leq \ell \leq n$.
That is, we assume that we have
$(\mathcal{C}_{\ell}, \mathcal{P}_{\ell})$ 
such that
$\hat Q_{\ell}$ is described by
\eqref{L2ineqCycFM} for $C \in \mathcal{C}_{\ell}$
and 
\eqref{L2ineqPathFM} for $L \in \mathcal{P}_{\ell}$.
Since
$\hat Q_{\ell+1}$ is the projection of $\hat Q_{\ell}$
along the coordinate axis of $y_{\ell}$,
we can obtain an inequality system for 
$\hat Q_{\ell+1}$ by eliminating the variable $y_{\ell}$
from the inequalities in 
\eqref{L2ineqPathFM} for $L \in \mathcal{P}_{\ell}$.
It is noted that 
the inequalities in 
\eqref{L2ineqCycFM} for $C \in \mathcal{C}_{\ell}$,
being free from $y_{\ell}$,
are not involved in the elimination process.

Let $\mathcal{P}_{\ell}\sp{+}$ denote 
the set of paths in $\mathcal{P}_{\ell}$
ending at vertex $\ell$ and, similarly,
let $\mathcal{P}_{\ell}\sp{-}$ be
the set of paths in $\mathcal{P}_{\ell}$
starting at vertex $\ell$.
For $L\sp{+} \in \mathcal{P}_{\ell}\sp{+}$
and
$L\sp{-} \in \mathcal{P}_{\ell}\sp{-}$,
express the corresponding inequalities as
\begin{align}
y_{\ell} - y_{i} + x( J\sp{+} ) - x( I\sp{+} ) 
 & \leq  \gamma(L\sp{+}),
 \label{L2ineqPathFMellp}
\\
y_{j} - y_{\ell} + x( J\sp{-} ) - x( I\sp{-} ) 
 & \leq  \gamma(L\sp{-}),
 \label{L2ineqPathFMellm}
\end{align}
where $i$ $(\neq \ell)$ is the starting vertex of $L\sp{+}$,
$j$ $(\neq \ell)$ is the end vertex of $L\sp{-}$,
and 
$(I\sp{+}, J\sp{+})$ and $(I\sp{-}, J\sp{-})$
are determined from 
the break vertices of $L\sp{+}$ and $L\sp{-}$, respectively.
The operation of the Fourier--Motzkin elimination 
is equivalent to adding \eqref{L2ineqPathFMellp} and \eqref{L2ineqPathFMellm},
resulting in 
\begin{align}
y_{j} - y_{i} +  [x( J\sp{+} )+ x( J\sp{-} )] - [x( I\sp{+} ) + x( I\sp{-} )] 
 & \leq  \gamma(L\sp{+}) + \gamma(L\sp{-}).
 \label{L2ineqPathFMelladded}
\end{align}
Then we obtain the following description of $\hat Q_{\ell+1}$:
\begin{align}
\hat Q_{\ell+1} = 
 \{ (x,y_{[\ell+1]}) \mid \ &  
\mbox{\rm \eqref{L2ineqCycFM} for all $C \in \mathcal{C}_{\ell}$},  
\nonumber \\ & 
\mbox{\rm 
\eqref{L2ineqPathFM} for all $L \in \mathcal{P}_{\ell} 
  \setminus (\mathcal{P}_{\ell}\sp{+} \cup \mathcal{P}_{\ell}\sp{-})$}, 
\nonumber \\ & 
\mbox{\rm 
\eqref{L2ineqPathFMelladded}
 for all $(L\sp{+}, L\sp{-}) \in \mathcal{P}_{\ell}\sp{+} \times \mathcal{P}_{\ell}\sp{-}$}
 \}.
 \label{L2QelldescFMnext}
\end{align}

\begin{remark} \rm \label{RMdecneeded}
Here is a remark to motivate 
our subsequent argument.
The inequality \eqref{L2ineqPathFMelladded}
for a pair 
$(L\sp{+}, L\sp{-}) \in \mathcal{P}_{\ell}\sp{+} \times \mathcal{P}_{\ell}\sp{-}$
corresponds to \eqref{FMp2pn} in the general framework of
the Fourier--Motzkin elimination.
As such, the inequality \eqref{L2ineqPathFMelladded}
is legitimate to describe $\hat Q_{\ell+1}$,
but it may contain coefficients of $\pm 2$.
More specifically, a coefficient of $2$ appears
if $(J\sp{+} \cap J\sp{-})\setminus (I\sp{+} \cup I\sp{-}) \neq \emptyset$.
In contrast, the coefficients 
must be taken from $\{ -1,0,+1 \}$
in \eqref{L2QelldescFM}.
In the following we are going to find a family of
inequalities of admissible forms
that implies \eqref{L2ineqPathFMelladded}.
This family of inequalities are used to update
$(\mathcal{C}_{\ell}, \mathcal{P}_{\ell})$ 
to
$(\mathcal{C}_{\ell+1}, \mathcal{P}_{\ell+1})$. 
\finbox
\end{remark}

Consider the concatenation (series connection) of
$L\sp{+}$ and $L\sp{-}$,
which is a path $\tilde L$ from $i$ to $j$ if $i \neq j$
or a cycle $\tilde C$ if $i=j$.
(The paths $L\sp{+}$ and $L\sp{-}$ 
may possibly have common edges,
and in such a case, it will be more precise to call 
$\tilde L$ a walk, although we refer to it as a path.) 
For the sake of description, 
we assume that we have a path $\tilde L$
with break vertices 
$( i_{1}, j_{1}, \ldots, i_{m+1}, j_{m+1})$,
while the other case of a cycle
can be treated in a similar manner.
This path is not necessarily simple.
In particular, there is a possibility of
$(I\sp{+} \cup J\sp{+}) \cap (I\sp{-} \cup J\sp{-}) \neq \emptyset$.
If $I\sp{+} \cap I\sp{-} \neq \emptyset$ or
$J\sp{+} \cap J\sp{-} \neq \emptyset$,
the associated inequality 
\eqref{L2ineqPathFM} contains coefficients other than $\{ -1,0,+1 \}$
(see \eqref{FMcoef2} for an example).

We can decompose the path $\tilde L$ from $i$ to $j$ into 
a union of a simple path $L_{0}$ from $i$ to $j$ and 
a family of simple cycles, say, 
$\{ C_{q} \mid q \in K \}$,
where $K$ can be empty.
We have
\begin{equation}  \label{gammaLCqLpLm}
\gamma(L_{0}) + \sum_{q \in K} \gamma(C_{q}) 
=  \gamma(L\sp{+}) + \gamma(L\sp{-}).
\end{equation}
For the simple path $L_{0}$ we can consider an inequality in \eqref{L2ineqPathFM},
which we denote as
\begin{equation} \label{L2ineqCycLprimFM}
  y_{j} - y_{i} + x(J_{0}) - x(I_{0}) \leq  \gamma(L_{0}),
\end{equation}
where $I_{0} \cup J_{0}$ is determined from 
the break vertices of $L_{0}$.
Each cycle $C_{q}$ is simple but may or may not be mixed.
If $C_{q}$ is mixed, 
we can consider an inequality of the form of \eqref{L2ineqCycFM}
associated with $C_{q}$, 
which we express as
\begin{equation} \label{L2ineqCycCqFM}
 x(J_{q}) - x(I_{q}) \leq  \gamma(C_{q}),
\end{equation}
where $I_{q} \cup J_{q}$
is the break vertices of $C_{q}$.
Denote the mixed cycles
among $\{ C_{q} \mid q \in K \}$
by $\{ C_{q} \mid q \in K_{*} \}$,
where $K_{*} \subseteq K$.
Then we have an inequality \eqref{L2ineqCycCqFM} for each $q \in K_{*}$.

We observe a simple counting relation using the notation 
$\varepsilon(\cdot, \cdot)$ in \eqref{memberindicator1}.

\begin{lemma} \label{LMcntPath}
For each $h \in N$, we have
\begin{equation}  
[\varepsilon(h,  J\sp{+} )+ \varepsilon(h,  J\sp{-} )] 
  - [\varepsilon(h,  I\sp{+} ) + \varepsilon(h,  I\sp{-} )]
 = \!\!  \sum_{q \in K_{*} \cup \{ 0 \}}  \!\!
 ( \varepsilon(h, J_{q})  - \varepsilon(h, I_{q}) ).
\label{L2ineqPathCycFMcount}
\end{equation}
\end{lemma}

\begin{proof}
Let $h \in N$.
The left-hand side of \eqref{L2ineqPathCycFMcount}
is equal to $+2$ if $h \in J\sp{+} \cap J\sp{-}$, and
it is equal to $0$ if $h \in J\sp{+} \cap I\sp{-}$, etc.:
\[
\begin{array}{c|ccc}
\mbox{LHS of \eqref{L2ineqPathCycFMcount}}
 & h \in I\sp{-} \phantom{a} & h \in J\sp{-} \phantom{a} 
        & h \notin I\sp{-} \cup J\sp{-} 
\\ \hline
h \in I\sp{+} & -2 & \phantom{+}0 & -1
\\
h \in J\sp{+} & \phantom{+}0 & +2 & +1
\\
h \notin I\sp{+} \cup J\sp{+} & -1 & +1 & \phantom{+}0
\\ \hline
\end{array}
\]
Next we consider the right-hand side of \eqref{L2ineqPathCycFMcount}.
Let $e_{\rm in}\sp{+}$ and $e_{\rm out}\sp{+}$ 
be the edges of $L\sp{+}$, if any, 
that enter and leave vertex $h$, respectively.
Define $e_{\rm in}\sp{-}$ and $e_{\rm out}\sp{-}$ similarly for $L\sp{-}$.

\begin{itemize}
\item
If $h \in J\sp{+} \cap J\sp{-}$,
then 
$\{ e_{\rm in}\sp{+}, e_{\rm in}\sp{-} \} \subseteq E_{1}$
and
$\{ e_{\rm out}\sp{+}, e_{\rm out}\sp{-} \} \subseteq E_{2}\sp{\circ}$,
and there exist precisely two $q$'s such that $h \in J_{q}$.
Hence the right-hand side of \eqref{L2ineqPathCycFMcount} is equal to $+2$.

\item
If $h \in J\sp{+} \cap I\sp{-}$, then 
$\{ e_{\rm in}\sp{+}, e_{\rm out}\sp{-} \} \subseteq E_{1}$
and
$\{ e_{\rm in}\sp{-}, e_{\rm out}\sp{+} \} \subseteq E_{2}\sp{\circ}$.
Two cases can be distinguished.
In the first case, 
there are distinct $q'$ and $q''$
such that
$h \in J_{q'}$ and $h \in I_{q''}$.
In the second case,
$h$ is not contained in any of $I_{q} \cup J_{q}$,
which occurs when
$\{ e_{\rm in}\sp{+}, e_{\rm out}\sp{-} \}$
is used by some $C_{q'}$ (or $L_{0}$) and
$\{ e_{\rm in}\sp{-}, e_{\rm out}\sp{+} \}$
is used by another $C_{q''}$ (or $L_{0}$).
In either case, the right-hand side of \eqref{L2ineqPathCycFMcount} is equal to $0$.

\item
If $h \notin I\sp{+} \cup J\sp{+}$ and $h \notin I\sp{-} \cup J\sp{-}$,
then 
$\{ e_{\rm in}\sp{+}, e_{\rm out}\sp{+} \}$
is contained in $E_{1}$ or $E_{2}\sp{\circ}$,
and similarly,
$\{ e_{\rm in}\sp{-}, e_{\rm out}\sp{-} \}$
is contained in 
$E_{1}$ or $E_{2}\sp{\circ}$.
Suppose, for example, that
$\{ e_{\rm in}\sp{+}, e_{\rm out}\sp{+} \} \subseteq E_{1}$
and 
$\{ e_{\rm in}\sp{-}, e_{\rm out}\sp{-} \} \subseteq E_{2}\sp{\circ}$.
Two cases can be distinguished:
There are distinct $q'$ and $q''$
such that
$h \in J_{q'}$ and $h \in I_{q''}$,
or else
$h$ is not contained in any of $I_{q} \cup J_{q}$.
In either case, the right-hand side of \eqref{L2ineqPathCycFMcount} is equal to $0$.
\end{itemize}
Similar arguments for other cases show that 
the right-hand side of \eqref{L2ineqPathCycFMcount}
coincides with the left-hand side of \eqref{L2ineqPathCycFMcount}.
\qedJIAM
\end{proof}

\begin{lemma} \label{LMineqimplyPath}
Inequality \eqref{L2ineqPathFMelladded}
is implied by 
\eqref{L2ineqCycLprimFM} for $L_{0}$ and
\eqref{L2ineqCycCqFM} for $C_{q}$ for all $q \in K_{*}$. 
\end{lemma}
\begin{proof}
Note first that
$\gamma(C_{q}) \geq 0$ if $C_{q}$ is not mixed, 
since neither $G_{1}$ nor $G_{2}\sp{\circ}$
contains negative cycles.
Then it follows from \eqref{gammaLCqLpLm} that
\begin{equation}  \label{gammaLCqmixLpLm}
\gamma(L_{0}) + \sum_{q \in K_{*}} \gamma(C_{q}) 
  \leq  \gamma(L\sp{+}) + \gamma(L\sp{-}).
\end{equation}
By adding \eqref{L2ineqCycLprimFM} and
\eqref{L2ineqCycCqFM} for $q \in K_{*}$ and using
\eqref{gammaLCqmixLpLm}, we obtain
\[
y_{j} - y_{i} + 
\! \sum_{q \in K_{*} \cup \{ 0 \}} \! ( x(J_{q}) - x(I_{q}) ) 
 \leq  \gamma(L_{0}) + \sum_{q \in K_{*}} \gamma(C_{q}) 
  \leq  \gamma(L\sp{+}) + \gamma(L\sp{-}).
\]
This implies \eqref{L2ineqPathFMelladded}, since 
\[
\sum_{q \in K_{*} \cup \{ 0 \} }  ( x(J_{q}) - x(I_{q}) )
 = [x( J\sp{+} )+ x( J\sp{-} )] - [x( I\sp{+} ) + x( I\sp{-} )]
\]
by Lemma~\ref{LMcntPath}.
\qedJIAM
\end{proof}

For each pair 
$(L\sp{+}, L\sp{-}) \in \mathcal{P}_{\ell}\sp{+} \times \mathcal{P}_{\ell}\sp{-}$,
we obtain a family 
$\mathcal{C}\sp{(L\sp{+}, L\sp{-})} := \{ C_{q} \mid q \in K_{*} \}$
of simple mixed cycles,
and also 
$\mathcal{P}\sp{(L\sp{+}, L\sp{-})} := \{ L_{0} \}$,
where $\mathcal{P}\sp{(L\sp{+}, L\sp{-})}$ is
defined to be an empty set if 
the concatenation of 
$L\sp{+}$ and $L\sp{-}$ forms a cycle.
With the use of 
$\mathcal{C}\sp{(L\sp{+}, L\sp{-})}$ and
$\mathcal{P}\sp{(L\sp{+}, L\sp{-})}$,
we can rephrase Lemma~\ref{LMineqimplyPath}
more precisely as follows:
The inequality \eqref{L2ineqPathFMelladded}
generated by the elimination operation for
a pair of inequalities indexed by    
$(L\sp{+}, L\sp{-}) \in \mathcal{P}_{\ell}\sp{+} \times \mathcal{P}_{\ell}\sp{-}$
is implied by 
\eqref{L2ineqCycLprimFM} for $\mathcal{P}\sp{(L\sp{+}, L\sp{-})}$
and \eqref{L2ineqCycCqFM} for $\mathcal{C}\sp{(L\sp{+}, L\sp{-})}$.
On the basis of this observation, 
we modify 
$(\mathcal{C}_{\ell}, \mathcal{P}_{\ell})$ 
to
$(\mathcal{C}_{\ell+1}, \mathcal{P}_{\ell+1})$ 
as
\begin{align}
\mathcal{C}_{\ell+1} & = \mathcal{C}_{\ell} \cup 
\bigg(
\bigcup \{ \mathcal{C}\sp{(L\sp{+}, L\sp{-})} \mid 
(L\sp{+}, L\sp{-}) \in \mathcal{P}_{\ell}\sp{+} \times \mathcal{P}_{\ell}\sp{-} \}
\bigg),
 \label{UpdatCyc}
\\
\mathcal{P}_{\ell+1} & = 
(\mathcal{P}_{\ell} \setminus (\mathcal{P}_{\ell}\sp{+} \cup \mathcal{P}_{\ell}\sp{-}))
\cup 
\bigg(
\bigcup \{ \mathcal{P}\sp{(L\sp{+}, L\sp{-})} \mid 
(L\sp{+}, L\sp{-}) \in \mathcal{P}_{\ell}\sp{+} \times \mathcal{P}_{\ell}\sp{-} \}
\bigg).
 \label{UpdatePath}
\end{align}
Then we obtain
\begin{equation*} 
\hat Q_{\ell+1} \supseteq
 \{ (x,y_{[\ell+1]}) \mid   
\mbox{\rm \eqref{L2ineqCycFM} for all $C \in \mathcal{C}_{\ell+1}$, \ 
\eqref{L2ineqPathFM} for all $L \in \mathcal{P}_{\ell+1}$} 
 \}.
\end{equation*}

Finally, we observe that the reverse inclusion ($\subseteq$) is obviously true.
Indeed, the addition of the inequalities in 
\eqref{QP1descPrf2} and \eqref{QP2descPrf2}
along 
$C \in \mathcal{C}_{\ell+1}$ 
(resp., $L \in \mathcal{P}_{\ell+1}$)
results in 
\eqref{L2ineqCycFM} for $C$
(resp., \eqref{L2ineqPathFM} for $L$).
This means that these inequalities are satisfied 
by every element of $\hat Q_{\ell+1}$, 
implying the reverse inclusion ($\subseteq$).

Thus we have completed
the proof of Proposition~\ref{PRdescQell}
by induction on $\ell$,
which in turn 
establishes Theorem~\ref{THl2polydescCyc}.

\subsection{Box-total dual integrality of \LL-convex polyhedra}
\label{SCboxTDI}

To state our result we need to define the concepts of
(box-)total dual integrality 
introduced by Edmonds and Giles \cite{EG84tdi}.

A linear inequality system $Ax \leq  b$
is said to be {\em totally dual integral} 
({\em TDI}) if the entries of $A$ and $b$ are rational numbers and
the minimum in the linear programming duality equation
\begin{equation*}  
  \max\{c\sp{\top} x \mid  Ax \leq b \} \ 
  = \ \min \{y\sp{\top} b \mid  y\sp{\top} A=c\sp{\top}, \ y\geq 0 \}
\end{equation*}
has an integral optimal solution $y$ for every integral vector $c$ 
such that the minimum is finite.  
A linear inequality system $Ax \leq  b$
is said to be {\em box-totally dual integral}  ({\em box-TDI}) 
if the system $[ Ax \leq  b, d \leq x\leq c ]$ 
is TDI for each choice of rational (finite-valued) vectors $c$ and $d$.  
It is known  \cite[Theorem 5.35]{Sch03} that
the system $A x \leq  b$ is box-TDI
if the matrix $A$ is totally unimodular.
A polyhedron is called a {\em box-TDI polyhedron} if it can be
described by a box-TDI system.  
See \cite{CGR21,Cook83,Cook86,Sch86,Sch03}
for more about box-total dual integrality.

We are now ready to state our result.

\begin{theorem} \label{THl2boxTDI}
An \LL-convex polyhedron is box-TDI.
More generally, an \LLnat-convex polyhedron is box-TDI.
\end{theorem}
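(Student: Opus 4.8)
The plan is to exhibit $P$ as a coordinate projection of a polyhedron defined by a totally unimodular system, and then to invoke the two facts already cited: a totally unimodular system is box-TDI \cite[Theorem~5.35]{Sch03}, and projection onto a coordinate hyperplane preserves box-total dual integrality \cite[Theorem~3.4]{Cook86},\cite[pp.~323--324]{Sch86}.

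First I would fix a representation $P = P_{1} + P_{2}$ with \Lnat-convex polyhedra $P_{1}$ and $P_{2}$. By Theorem~\ref{THlnatpolydesc}(2) each $P_{k}$ can be written as $P_{k} = \{ u \in \RR\sp{n} \mid C_{k} u \leq g_{k} \}$, where every row of $C_{k}$ is one of $\unitvec{j}-\unitvec{i}$, $\unitvec{i}$, $-\unitvec{i}$; hence $C_{k}$ has at most one $+1$ and at most one $-1$ in each row, so it is the incidence matrix of a digraph on $N \cup \{ 0 \}$ with the column of node $0$ deleted, and in particular is totally unimodular. With this, consider the polyhedron $\widehat{Q} = \{ (x,y,z) \in \RR\sp{3n} \mid x = y+z, \ C_{1}y \leq g_{1}, \ C_{2}z \leq g_{2} \}$ --- this is the polyhedron $Q$ of \eqref{l2Qdef}, now built from \Lnat-convex $P_{1}, P_{2}$ --- whose projection onto the $x$-coordinates is precisely $P$ by \eqref{l2PprojQ}.

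The key step is to check that $\widehat{Q}$ is box-TDI, and for this it suffices to show its constraint matrix is totally unimodular. Writing each equality $x_{i} = y_{i}+z_{i}$ as two opposite inequalities (harmless for total unimodularity), the matrix in question is, up to the duplicated equality rows, $A = \left( \begin{smallmatrix} I_{n} & -I_{n} & -I_{n} \\ 0 & C_{1} & 0 \\ 0 & 0 & C_{2} \end{smallmatrix} \right)$, with column blocks indexed by $x,y,z$. Each column indexed by an $x_{i}$ has a single nonzero entry, the $+1$ in the $i$th equality row; expanding a square subdeterminant of $A$ repeatedly along such columns reduces the total-unimodularity check to that of the submatrix $A'$ obtained by deleting all $x$-columns. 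Now $A' = \left( \begin{smallmatrix} -I_{n} & -I_{n} \\ C_{1} & 0 \\ 0 & C_{2} \end{smallmatrix} \right)$ becomes, after one negates every column indexed by a $y_{i}$, a $\{0, \pm 1\}$-matrix with at most one $+1$ and at most one $-1$ in each row, i.e.\ the incidence matrix of a digraph on $\{ y_{1}, \dots, y_{n}, z_{1}, \dots, z_{n}, 0 \}$, which is totally unimodular; column negation preserves total unimodularity, so $A'$, and therefore $A$, is totally unimodular. Hence $\widehat{Q}$ is a box-TDI polyhedron, and projecting out the $2n$ coordinates $y_{1}, \dots, y_{n}, z_{1}, \dots, z_{n}$ one by one --- each step a projection onto a coordinate hyperplane --- shows that $P$ is box-TDI. (The identical computation, with $C_{k}$ a genuine digraph incidence matrix on $N$, also yields box-total dual integrality of \LL-convex polyhedra.)

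The one point requiring care is the total-unimodularity argument for $A$, specifically the order of reduction: the equality rows have three nonzeros and cannot simultaneously be turned into graph arcs by sign changes of columns, so the (single-nonzero) $x$-columns must be removed first; after that the equality rows carry only the entries $-1$ at $y_{i}$ and $-1$ at $z_{i}$, and the single block of column negations described above does realize the whole matrix as a network incidence matrix. The remaining ingredients --- the decomposition $P = P_{1}+P_{2}$, the passage to $\widehat{Q}$, and the iterated projection --- are routine, so I anticipate no further difficulty.
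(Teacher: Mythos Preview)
Your proof is correct and follows essentially the same route as the paper: lift $P$ to the polyhedron $Q$ in $(x,y,z)$-space, observe that its constraint matrix is totally unimodular (hence $Q$ is box-TDI), and then project. The only cosmetic difference is that the paper first treats the \LL-convex case and then obtains the \LLnat-convex case via Proposition~\ref{PRfromL2toL2nat}(2), whereas you work directly with \Lnat-convex summands; your total-unimodularity verification is also more explicit than the paper's, which simply asserts it.
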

\begin{proof}
We first consider an \LL-convex polyhedron $P$.
Recall from Section~\ref{SCproofL2ineq}
that an \LL-convex polyhedron $P$ is obtained from 
the polyhedron $Q$ of \eqref{l2Qdef},
which is described by the system
consisting of 
\eqref{QP1P2descPrf},
\eqref{QP1descPrf}, and 
\eqref{QP2descPrf}.
This system can be written as 
\begin{equation}  \label{QP1P2matrix}
  \left[ \begin{array}{ccc}
  -I & I &  I  \\
  I & -I & -I  \\
   O & B_1 & O  \\
   O  & O  & B_2 \\
\end{array} \right]
  \left[ \begin{array}{ccc} x \\ y \\ z  \end{array} \right]  
\leq 
  \left[ \begin{array}{ccc} 0 \\ 0 \\ \gamma\sp{(1)} \\ \gamma\sp{(2)}
 \end{array} \right] , 
 \end{equation}
where each $I$ is the identity matrix of order $n$
and, for $k=1,2$, 
$B_{k}$ is a matrix whose rows are 
$\unitvec{j} - \unitvec{i}$ for $(i,j) \in E_{k}$.
Each $B_{k}$ is totally unimodular.%
\footnote{
Matrix $B_{k}$ is the transpose of the incidence matrix 
of the graph $G_{k}$  in Section~\ref{SCgraphrep}.
} 
Therefore, the matrix 
$A=$
{\footnotesize 
$\left[ \begin{array}{ccc}
  -I & I &  I  \\
  I & -I & -I  \\
   O & B_1 & O  \\
   O  & O  & B_2 \\
\end{array} \right]$
}
is also totally unimodular,
which implies that the system 
\eqref{QP1P2matrix} is box-TDI,
and hence the polyhedron $Q$
described by \eqref{QP1P2matrix} is box-TDI.
Since the projection of a box-TDI polyhedron
on a coordinate hyperplane is box-TDI
(\cite[Theorem 3.4]{Cook86}, \cite[pp.~323--324]{Sch86}),
the polyhedron $P$ is also box-TDI.
Thus every \LL-convex polyhedron is box-TDI.
By Proposition~\ref{PRfromL2toL2nat}(2),
this implies further that 
every \LLnat-convex polyhedron is box-TDI; see \cite[p.~323]{Sch86}.
\qedJIAM
\end{proof}

Theorem~\ref{THl2boxTDI} enables us to 
apply the results of 
Frank and Murota \cite{FM21boxTDI}
for separable convex minimization on a box-TDI set.
By so doing 
we can obtain min-max formulas for separable convex minimization on an \LLnat-convex set.

\begin{remark} \rm \label{RMboxTDIredund}
Here is a supplementary remark about Theorem~\ref{THl2boxTDI}.
It is known (\cite[p.~323]{Sch86}, \cite[Theorem 2.5]{Cook83})
that box-total dual integrality is maintained 
under Fourier--Motzkin elimination
if the coefficients belong to $\{ -1,0,+1 \}$.
This is the case with our system 
\eqref{QP1descPrf2}--\eqref{QP2descPrf2}.
However, we have discarded redundant inequalities
in the course of the elimination process,
whereas redundant inequalities are often necessary for
a system of inequalities to be box-TDI.
In view of this, 
box-total dual integrality of the system \eqref{L2PdescCyc} 
does not seem to follow from our argument,
although it is likely that the system 
\eqref{L2PdescCyc} 
is, in fact, box-TDI.   
\finbox
\end{remark}

We note in passing 
that the intersection of an \LLnat-convex polyhedron with a box
is not necessarily an \LLnat-convex polyhedron,
although it remains to be an integral polyhedron by Theorem~\ref{THl2boxTDI}.
An example is given below.

\begin{example} \rm \label{EXinterL2natB}
Recall the \LLnat-convex set 
$ S = \{ (0,0,0), (1,1,0), (0,1,1), (1,2,1) \}$
in Example~\ref{EXLnat2poly},
and let $B = \{ 0, 1 \}\sp{3}$ be the unit box of integers.
Then 
$S \cap B = \{ (0,0,0), (1,1,0), (0,1,1) \}$ is not an \LLnat-convex set,
since $S \cap B$ itself is not \Lnat-convex,
and the decomposition  
$S \cap B =S'_{1} + S'_{2}$
with $S'_{1}, S'_{2} \subseteq \ZZ\sp{3}$ 
is possible only if 
$S'_{i} = S \cap B$ and $S'_{j} = \{ (0,0,0) \}$
for $i \neq j$.
A similar statement applies to the polyhedral version.
Indeed, the convex hull $\overline{S}$ of $S$ is an \LLnat-convex polyhedron,
and 
$\overline{S} \cap \overline{B}$
$(=\overline{S \cap B})$ is not an \LLnat-convex polyhedron. 
\finbox
\end{example}

\section{Implications}
\label{SCimpli}

In this section we show 
alternative proofs to some fundamental facts on discrete convexity
by using the results of this paper on 
polyhedral descriptions of \LL- and \LLnat-convexity.
In general terms, we can distinguish between
``inner'' descriptions and ``outer'' descriptions
of combinatorial objects.
For example, a matroid can be defined by an exchange axiom
as well as a submodular (rank) function,
where the former is an inner description and 
latter an outer description.
Such a dual view often affords a deeper understanding,
which is the general recognition behind this section.
The alternative proofs given here are of outer-type, 
while the existing proofs are of inner-type.
We use Theorems \ref{THl2polydesc} and \ref{THlnat2polydesc},
and not their refined versions 
in Theorems \ref{THl2polydescCyc} and \ref{THlnat2polydescCyc}.

\subsection{\LL-convexity and \Lnat-convexity}

It is pointed out recently in \cite{MM21inclinter} 
that a polyhedron $P$ (or a set $S$ of integer vectors)
is L-convex if and only if 
it is both \LL-convex and \Lnat-convex.
Here we show an alternative polyhedral proof,
based on Theorem~\ref{THl2polydesc},
when the polyhedron is full-dimensional.
It is mentioned, however, that Proposition~\ref{PRl2lnatB} itself is an easy fact, which 
can be proved in any way, but the given proof will serve as a prototype
of the polyhedral argument.

\begin{proposition}[\cite{MM21inclinter}] \label{PRl2lnatB}
\quad

\noindent
{\rm (1)}
A polyhedron $P$ $(\subseteq \RR\sp{n})$ is L-convex
if and only if
it is both \LL-convex and \Lnat-convex.

\noindent
{\rm (2)}
A set $S$ $(\subseteq \ZZ\sp{n})$ is L-convex
if and only if
it is both \LL-convex and \Lnat-convex.
\end{proposition}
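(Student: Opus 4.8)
The plan is to prove part~(1) in both directions and then reduce part~(2) to it. The forward direction ``L-convex $\Rightarrow$ \LL- and \Lnat-convex'' is essentially already recorded in the excerpt: an L-convex polyhedron is \Lnat-convex, and $P = P + \{\mu\vecone \mid \mu\in\RR\}$ exhibits $P$ as a Minkowski sum of the two L-convex polyhedra $P$ and $\{\mu\vecone\mid\mu\in\RR\}$ (the latter being L-convex, and $P + \{\mu\vecone\mid\mu\in\RR\} = P$ by \eqref{invaroneP}); the set version replaces $\RR$ by $\ZZ$. For the converse I would first give a short direct argument: by definition an L-convex polyhedron is precisely a polyhedron satisfying \eqref{submsetLP} and \eqref{invaroneP}, so it suffices to verify these two conditions for $P$. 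Closure under $\vee,\wedge$ comes from \Lnat-convexity: if $P = \{x \mid (x,0) \in Q\}$ with $Q$ L-convex and $x,y \in P$, then $(x,0),(y,0) \in Q$ give $(x\vee y,0),(x\wedge y,0) \in Q$, hence $x\vee y, x\wedge y \in P$. Invariance under $x \mapsto x + \mu\vecone$ comes from \LL-convexity: writing $P = P_1 + P_2$ with $P_1,P_2$ L-convex and $x = x_1 + x_2$ with $x_k \in P_k$, we have $x_1 + \mu\vecone \in P_1$ by \eqref{invaroneP} for $P_1$, so $x + \mu\vecone = (x_1 + \mu\vecone) + x_2 \in P_1 + P_2 = P$. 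The set version is identical, using \eqref{submsetL}, \eqref{invarone}, and $\mu \in \ZZ$.

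If instead one wants the polyhedral proof flagged in the text, based on Theorem~\ref{THl2polydesc} and valid when $P$ is full-dimensional, the argument runs through facet normals. A full-dimensional polyhedron has a unique irredundant inequality description, and each facet normal is, up to a positive scalar, a row of any system describing $P$. Theorem~\ref{THl2polydesc}(2) describes $P$ by inequalities $x(J) - x(I) \le \gamma_{IJ}$ with $|I| = |J|$ and $I \cap J = \emptyset$, so every facet normal of $P$ is a positive multiple of some $\unitvec{J} - \unitvec{I}$ with $|I| = |J|$ (cf.\ Remark~\ref{RMedgeM2}). Theorem~\ref{THlnat2polydesc}(2) describes $P$ by box inequalities and edge inequalities $x_j - x_i \le \gamma_{ij}$, so every facet normal of $P$ is a positive multiple of $\unitvec{i}$, $-\unitvec{i}$, or $\unitvec{j} - \unitvec{i}$. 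A positive multiple of $\unitvec{J} - \unitvec{I}$ with $|I| = |J| \ge 1$ has equally many strictly positive and strictly negative entries, so it cannot equal $\pm\unitvec{i}$; hence every facet normal of $P$ is a positive multiple of some $\unitvec{j} - \unitvec{i}$ with $i \ne j$. Therefore $P = \{x \in \RR\sp{n} \mid x_j - x_i \le \gamma_{ij}\ ((i,j) \in E)\}$ for a finite family $E$ of pairs; setting $\gamma_{ij} = +\infty$ off $E$, Remark~\ref{RMpolydescL} (third bullet) yields that $P$ is L-convex since $P \ne \emptyset$.

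For part~(2) in the full-dimensional case one passes to convex hulls: if $S = S_1 + S_2$ with $S_k$ L-convex, then $\overline{S} = \overline{S_1} + \overline{S_2}$ is \LL-convex, it is also \Lnat-convex, hence L-convex by~(1); and $S = \overline{S} \cap \ZZ\sp{n}$ because $S$ is \Lnat-convex, so $S$ is L-convex by Theorem~\ref{THlpolydesc}(1). The step I expect to be the most delicate is not any single computation but the justification, in the facet argument, that the facets of a full-dimensional polyhedron must literally appear (up to positive scaling) in every inequality representation, together with the clean merging of the two admissible families of normal directions; this facet route moreover says nothing about lower-dimensional $P$, where one must either invoke \cite{MM21inclinter} or fall back on the direct argument above, which carries no dimensionality restriction.
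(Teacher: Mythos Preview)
Your second argument (via facet normals) is essentially the paper's own proof, which is given only for the full-dimensional case and cites \cite{MM21inclinter} for the general statement. Your presentation of that route is correct and slightly more explicit than the paper's about why the normal cannot be $\pm\unitvec{i}$.

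Your first argument, however, is genuinely different from anything the paper does, and it is worth highlighting. The paper treats the converse by comparing two admissible families of facet normals, which forces the full-dimensionality hypothesis (otherwise the facet description is not unique up to scaling). You instead verify the defining properties \eqref{submsetLP} and \eqref{invaroneP} directly: lattice closure is inherited from the ambient L-convex polyhedron $Q$ witnessing \Lnat-convexity, and $\vecone$-translation invariance is inherited from either summand in the \LL-decomposition. This is strictly more general than the paper's proof---it needs no dimensionality assumption and no appeal to Theorem~\ref{THl2polydesc}---and it is also shorter. What the paper's facet route buys, by contrast, is a demonstration that the polyhedral description of Theorem~\ref{THl2polydesc} has genuine content: it can recover known structural facts by pure linear-algebraic comparison of normal vectors, which is the theme of Section~\ref{SCimpli}.

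One small point on the forward direction: your decomposition $P = P + \{\mu\vecone \mid \mu\in\RR\}$ is correct and necessary, since the obvious $P = P + \{\veczero\}$ fails (the singleton $\{\veczero\}$ violates \eqref{invaroneP} and is not L-convex). The paper simply asserts ``an L-convex set is an \LL-convex set'' in Section~\ref{SCprelimL2} without spelling this out, so your justification is a welcome addition.
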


\begin{proof}(Full-dimensional case) \ 
First note that (2) follows from (1) applied to the convex hull of $S$,
and that the only-if-part of (1) is obvious.
In the following, we prove the if-part of (1), that is, 
if a polyhedron $P$ is both
\LL-convex and \Lnat-convex,
then $P$ is L-convex.
Let $F$ be any facet of $P$ and $\nu$ a normal vector of $F$.
Since $P$ is \LL-convex, we have $\nu = c( \unitvec{I} - \unitvec{J}) $
for some $c \ne 0$ and disjoint $I, J \subseteq N$ with $|I|=|J|$
by Theorem~\ref{THl2polydesc} 
(see also Remark~\ref{RMedgeM2}
in Section~\ref{SCpolydescThmS}).
On the other hand,
since $P$ is \Lnat-convex, we have 
$\nu = c (\unitvec{i} - \unitvec{j})$
or 
$\nu = c \unitvec{i}$
for some $c \ne 0$ and $i \ne j$ 
by Theorem~\ref{THlnatpolydesc}.
Then it follows that 
$\nu = c (\unitvec{i} - \unitvec{j})$
for some $c \ne 0$ and $i \ne j$.
Therefore, $P$ is L-convex by 
Theorem~\ref{THlpolydesc}.
\qedJIAM
\end{proof}

\subsection{\LLnat-convexity and multimodularity}
\label{SClnat2mm}

It is proved recently in \cite{MM21inclinter} 
that a polyhedron $P$ (or a set $S$ of integer vectors)
is a box if and only if 
it is both \LLnat-convex and multimodular
(see Appendix \ref{SCmmset}
for the definition of multimodular sets).
Here we show an alternative polyhedral proof,
based on 
Theorem~\ref{THlnat2polydesc},
when the polyhedron is full-dimensional.

\begin{proposition}[{\cite{MM21inclinter}}] \label{PRlnat2mmB}
\quad

\noindent
{\rm (1)}
A polyhedron $P$ $(\subseteq \RR\sp{n})$ is a box of reals
if and only if
it is both \LLnat-convex and multimodular.

\noindent
{\rm (2)}
A set $S$ $(\subseteq \ZZ\sp{n})$ is a box of integers
if and only if
it is both \LLnat-convex and multimodular.
\end{proposition}

\begin{proof}
(Full-dimensional case) \
First note that (2) follows from (1) applied to the convex hull of $S$,
and that the only-if-part of (1) is obvious
(cf., \cite[Proposition~2]{MM19multm}).
We prove the if-part of (1) when $P$ is full-dimensional.
Let $F$ be any facet
of $P$ and $\nu$ a normal vector of $F$.
Since $P$ is \LLnat-convex, we have $\nu = c( \unitvec{I} - \unitvec{J}) $
for some $c \ne 0$ and 
disjoint $I, J \subseteq N$ with $|I|-|J| \in \{ -1,0,1 \}$
by Theorem~\ref{THlnat2polydesc}.
On the other hand,
since $P$ is multimodular, we have $\nu = c \unitvec{I}$
for some $c \ne 0$ and consecutive index set $I \subseteq N$ by 
Theorem~\ref{THmmpolydesc}.
Then it follows that $\nu =  c \unitvec{i}$ for some $c \ne 0$ and $i \in N$.
Therefore, $P$ is a box.
\qedJIAM
\end{proof}

\subsection{\LLnat-convexity and \MMnat-convexity}
\label{SClnat2mnat2}

A nonempty set $S$ $\subseteq \ZZ\sp{n}$ is called 
{\em \MMnat-convex}  
if it can be represented as the intersection of two \Mnat-convex sets.
Similarly, a polyhedron $P$ $\subseteq \RR\sp{n}$
is called  \MMnat-convex
if it is the intersection of two \Mnat-convex polyhedra.
See Remark~\ref{RMmnat2poly} below
for the definitions of \Mnat-convex sets and polyhedra.

It is known \cite[Lemma 5.7]{MS01rel} 
that a set $S$ $(\subseteq \ZZ\sp{n})$  is a box of integers
if and only if 
it is both \LLnat-convex and \MMnat-convex,
and an analogous statement is true for the polyhedral case.
Here we show an alternative polyhedral proof,
based on Theorem~\ref{THlnat2polydesc},
when the polyhedron is full-dimensional.

\begin{proposition}[{\cite[Lemma 5.7]{MS01rel}}]  \label{PRlnat2mnat2B}
\quad

\noindent
{\rm (1)}
A polyhedron $P$ $(\subseteq \RR\sp{n})$ is a box of reals
if and only if
it is both \LLnat-convex and \MMnat-convex.

\noindent
{\rm (2)}
A set $S$ $(\subseteq \ZZ\sp{n})$ is a box of integers
if and only if
it is both \LLnat-convex and \MMnat-convex.
\end{proposition}

\begin{proof}(Full-dimensional case) \ 
First note that (2) follows from (1) applied to the convex hull of $S$,
and that the only-if-part of (1) is obvious.
We prove the if-part of (1) when $P$ is full-dimensional.
Let $F$ be any facet
of $P$ and $\nu$ a normal vector of $F$.
Since $P$ is \LLnat-convex, we have $\nu = c( \unitvec{I} - \unitvec{J}) $
for some $c \ne 0$ and disjoint $I, J \subseteq N$ with $|I|-|J| \in \{ -1,0,1 \}$
by Theorem~\ref{THlnat2polydesc}.
On the other hand,
since $P$ is \MMnat-convex, we have 
$\nu = c \unitvec{I}$
for some $c \ne 0$ and $I \subseteq N$
(see Remark~\ref{RMmnat2poly} below).
Then it follows that $\nu =  c \unitvec{i}$ for some $c \ne 0$ and $i \in N$.
Therefore, $P$ is a box.
\qedJIAM
\end{proof}

\begin{remark} \rm \label{RMmnat2poly}
An \Mnat-convex polyhedron is a synonym of 
a generalized polymatroid
 (g-polymatroid),
which is described as
$\{ x \mid \mu(I)\leq x(I) \leq \rho(I) \ (I \subseteq N) \}$
with a strong pair of supermodular $\mu$ and submodular $\rho$
(see \cite[Section  3.5(a)]{Fuj05book}, \cite[Section 4.8]{Mdcasiam}).
If $\mu$ and $\rho$ are
integer-valued, the corresponding 
g-polymatroid is an integral polyhedron,
and the set of its integer points is called
an \Mnat-convex set.
The intersection of two \Mnat-convex sets (resp., polyhedra) is 
called an \MMnat-convex set (resp., polyhedron).
Hence an \MMnat-convex set  (resp., polyhedron)
is described by a system of inequalities of the form 
$\max\{ \mu_{1}(I), \mu_{2}(I) \} \leq x(I) \leq \min\{  \rho_{1}(I), \rho_{2}(I) \}$,
from which follows that
a normal vector $\nu$ is of the form of
$\nu = c \unitvec{I}$ for some $c \ne 0$ and $I \subseteq N$.
The reader is referred to 
\cite[Section 4.7]{Mdcasiam} 
for more about \Mnat-convex sets.
\finbox
\end{remark}

\begin{remark} \rm \label{RMlnat2mnat2B}
To be precise, 
\cite[Lemma 5.7]{MS01rel} 
does not deal with the polyhedral case stated in (1) 
of Proposition \ref{PRlnat2mnat2B}.
However, the proof there
can be extended almost literally to the polyhedral case.
For completeness, we show the proof 
adapted to the polyhedral case.
That is, we prove here that, if a polyhedron $P$ is both
\LLnat-convex and \MMnat-convex,
then $P$ is a box of reals.
Let $P_1, P_2 \subseteq \RR\sp{n}$ be \Lnat-convex polyhedra such that
$P = P_1 + P_2$,
and let $Q_1, Q_2 \subseteq \RR\sp{n}$ be \Mnat-convex polyhedra 
(g-polymatroids) such that
$P = Q_1 \cap Q_2$.

First we explain the idea of the proof when $P$ is bounded.
Each $P_{k}$ has the unique minimum element $a^k \in P_{k}$ 
and the unique maximum element $b^k \in P_{k}$.
Then $a = a^1 + a^2$ is the unique minimum of $P$
and
$b = b^1 + b^2$ is the unique maximum of $P$,
for which we have
$P \subseteq [a, b]_{\RR}$.
Since
$a, b \in P = Q_1 \cap Q_2$, we have
$a, b \in Q_{k}$ for $k=1,2$,
where $a \leq b$.
This implies
$[a, b]_{\RR} \subseteq Q_{k}$,
as is easily seen from the polyhedral description of an \Mnat-convex polyhedron.
Therefore, $[a, b]_{\RR} \subseteq Q_1 \cap Q_2 = P$.
Thus we have proved $[a, b]_{\RR} = P$.

The general case where $P$ may be unbounded can be treated as follows.
For each $i \in N$, put 
$a_{i} := \inf_{y \in P}y_{i}$ and
$b_{i} := \sup_{y \in P}y_{i}$,
where we have the possibility of 
$a_{i}=-\infty$ and/or 
$b_{i}=+\infty$.
 Obviously, $P \subseteq [a, b]_{\RR}$ holds.
 To prove $[a, b]_{\RR} \subseteq P$, take any $x \in [a, b]_{\RR}$. 
 For each $i \in N$, there exist vectors $p\sp{i}, q\sp{i} \in P$
such that $p\sp{i}_{i} \leq x_{i} \leq q\sp{i}_{i}$,
where $p\sp{i}_{i}$, $x_{i}$, and $q\sp{i}_{i}$ denote the $i$th component of 
vectors $p\sp{i}$, $x$, and $q\sp{i}$, respectively.
Since $p\sp{i}, q\sp{i} \in P = P_1 + P_2$,
we can express them as
$p\sp{i} = p\sp{i1} + p\sp{i2}$, 
$q\sp{i} = q\sp{i1} + q\sp{i2}$
with some $p\sp{ik}, q\sp{ik} \in P_{k}$ $(k = 1, 2)$.
Consider
\[
p\sp{k} := \bigwedge_{i \in N}p\sp{ik} \in P_{k} , \quad 
q\sp{k} := \bigvee_{i \in N}q\sp{ik} \in P_{k} \quad (k = 1, 2),
\]
and let $p := p\sp{1} + p\sp{2} \in P$ and
$q := q\sp{1} + q\sp{2} \in P$.
Then, for each $i \in N$,
we have 
\[
 p_{i} = p\sp{1}_{i} + p\sp{2}_{i} \leq p\sp{i1}_{i} + p\sp{i2}_{i} = p\sp{i}_{i} \leq x_{i} ,
\quad
 q_{i} = q\sp{1}_{i} + q\sp{2}_{i} \geq q\sp{i1}_{i} + q\sp{i2}_{i} = q\sp{i}_{i} \geq x_{i},
\]
showing $x \in [p, q]_{\RR}$.
Since
$p, q \in P= Q_1 \cap Q_2$, we have
$p, q \in Q_{k}$ for $k=1,2$,
where $p \leq q$.
This implies
$[p, q]_{\RR} \subseteq Q_{k}$,
which follows from the polyhedral description of an \Mnat-convex polyhedron.
Therefore, $x \in [p, q]_{\RR} \subseteq Q_1 \cap Q_2 = P$,
where $x$ is an arbitrarily chosen element of $[a, b]_{\RR}$.
Hence
$[a, b]_{\RR} \subseteq P$.
Thus we complete the proof of $[a, b]_{\RR} = P$.
\finbox
\end{remark}


\section{Conclusion}
\label{SCconcl}

We conclude this paper by summarizing 
our present knowledge about the polyhedral description 
of discrete convex sets in Table \ref{TBpolydesc}.
The polyhedral description of \MM-convex (resp.,\MMnat-convex) sets
is obtained immediately from that of M-convex (resp., \Mnat-convex) sets;
see Remark~\ref{RMmnat2poly}.
The polyhedral description of multimodular sets,
described in Theorem~\ref{THmmpolydesc},
has recently been obtained in \cite{MM21inclinter}.
Polyhedral descriptions are not known for
integrally convex sets
\cite[Section 3.4]{Mdcasiam},
discrete midpoint convex sets \cite{MMTT20dmc},
and directed discrete midpoint convex sets \cite{TT21ddmc}.

\begin{table}
\begin{center}
\caption{Polyhedral descriptions of discrete convex sets}
\label{TBpolydesc}
\renewcommand{\arraystretch}{1.1}%
\begin{tabular}{l|lc}
  & Vector $a$ for $\langle a,x \rangle \leq b$  &   Ref. 
\\ \hline
 Box (interval) & $\pm \unitvec{i}$    & obvious 
\\ 
L-convex & $\unitvec{j} - \unitvec{i}$   & \cite[Sec.5.3]{Mdcasiam}
\\ 
\Lnat-convex  & $\unitvec{j} - \unitvec{i}$, \  \ $\pm \unitvec{i}$  & \cite[Sec.5.5]{Mdcasiam}   
\\ 
\LL-convex & $\unitvec{J} - \unitvec{I}$  \  ($|I|=|J|$)  & this paper 
\\ 
\LLnat-convex & $\unitvec{J} - \unitvec{I}$  \ ($|I|-|J| \in \{ -1,0,1 \}$)   & this paper 
\\ 
M-convex & $\unitvec{I}$, \ \ $- \unitvec{N} (= -\vecone)$  & \cite[Sec.4.4]{Mdcasiam}  
\\
\Mnat-convex  & $\pm \unitvec{I}$    &  \cite[Sec.4.7]{Mdcasiam}
\\ 
\MM-convex & $\unitvec{I}$, \ \ $- \unitvec{N} (= -\vecone)$   & by M-convex
\\
\MMnat-convex  & $\pm \unitvec{I}$     &  by \Mnat-convex
\\ 
Multimodular  & $\pm \unitvec{I}$ \  ($I$: consecutive)   & \cite{MM21inclinter} 
\\ \hline
\end{tabular}
\renewcommand{\arraystretch}{1.0}%
\end{center}
\end{table}

\medskip

\noindent {\bf Acknowledgement}. 
This work was partially supported by JSPS KAKENHI Grant Numbers 
JP17K00037, JP20K11697, JP21K04533.
The authors thank Akihisa Tamura for careful reading of the manuscript
and to Satoru Iwata for communicating his personal memorandum \cite{Iwa21adj}.
Comments from the referees were helpful to improve the paper.


\appendix

\section{Definitions from discrete convex analysis}
\label{SCappend}

\subsection{L-convex and M-convex functions}
\label{SClmfn}

A function 
$f: \ZZ\sp{n} \to \RR \cup \{ +\infty \}$
with $\dom f \not= \emptyset$ 
is called {\em L-convex}
if it is submodular:
\begin{equation*} 
f(x) + f(y) \geq f(x \vee y) + f(x \wedge y)
\end{equation*}
for all $x, y \in \ZZ\sp{n}$
 and there exists
$r \in \RR$ such that 
\begin{equation*}  
f(x + \mu \vecone) = f(x) + \mu  r
\end{equation*}
for all $x  \in \ZZ\sp{n}$ and $\mu \in \ZZ$.

A function 
$f: \ZZ\sp{n} \to \RR \cup \{ +\infty \}$
with $\dom f \not= \emptyset$ 
is called
{\em \LL-convex}
if it can be represented as the (integral) infimal convolution 
$f_{1} \Box f_{2}$  of 
two L-convex functions
$f_{1}$ and $f_{2}$, that is,
if
\[
f(x) =  (f_{1} \Box f_{2})(x) = \inf\{ f_{1}(y)+ f_{2}(z) 
        \mid x = y+z; \  y,z \in \ZZ\sp{n}  \}
\qquad (x \in \ZZ\sp{n}).
\]
It is known 
\cite[Note 8.37]{Mdcasiam}
that the infimum
is always attained as long as it is finite.

A function 
$f: \ZZ\sp{n} \to \RR \cup \{ +\infty \}$
with $\dom f \not= \emptyset$ 
is called
{\em M-convex}
if it satisfies
the exchange property:
\begin{description}
\item[\Mvexb]
 For any $x, y \in \dom f$  and $i \in \suppp(x-y)$, 
there exists
$j \in \suppm(x-y)$ such that
\begin{equation*}  
f(x) + f(y)   \geq 
 f(x-\unitvec{i}+\unitvec{j}) + f(y+\unitvec{i}-\unitvec{j}) .
\end{equation*}
\end{description}
A nonempty set $S$ is called M-convex
if its indicator function $\delta_{S}$ is an M-convex function.

A function 
$f: \ZZ\sp{n} \to \RR \cup \{ +\infty \}$
with $\dom f \not= \emptyset$ 
is called 
{\em \MM-convex}
if it can be represented as the sum of
two M-convex functions
$f_{1}$ and $f_{2}$, that is,
if
$f(x) =  f_{1}(x) + f_{2}(x)$
$(x \in \ZZ\sp{n})$.
A nonempty set $S$ is called \MM-convex
if it is the intersection of two M-convex sets,
or equivalently, 
if its indicator function $\delta_{S}$ is an \MM-convex function.

The reader is referred to 
\cite{Mdcasiam}  
for characterizations and properties of 
L-, \LL-, M-, and \MM-convex functions.

\subsection{Multimodular sets}
\label{SCmmset}

Let $\calF \subseteq \ZZ\sp{n}$ be the set of vectors defined by
$\calF = \{ -\unitvec{1}, \unitvec{1}-\unitvec{2}, 
  \unitvec{2}-\unitvec{3}, \ldots, 
  \unitvec{n-1}-\unitvec{n}, \unitvec{n} \} $, 
where $\unitvec{i}$ denotes the $i$th unit vector for $i =1,2,\ldots, n$.
A set $S  \subseteq \ZZ\sp{n}$
is said to be {\em multimodular}
if 
\begin{equation*}  
 z+d, \  z+d' \in S \ \Longrightarrow \  z, \ z+d+d' \in S
\end{equation*}
for all $z \in \ZZ\sp{n}$ and all distinct $d, d' \in \calF$.
(The concept of multimodularity is introduced by Hajek \cite{Haj85}
for functions
and its set version is formulated in \cite{MM19multm}.)

It is known \cite{MM19multm,Mmult05}
that multimodular sets are precisely those sets 
which are obtained from 
\Lnat-convex sets by a simple coordinate change.
Define a  bidiagonal matrix 
$D=(d_{ij} \mid 1 \leq i,j \leq n)$ by
$ d_{ii}=1$ $(i=1,2,\ldots,n)$ and
$ d_{i+1,i}=-1$ $(i=1,2,\ldots,n-1)$.
This matrix $D$ is unimodular, and its inverse
$D\sp{-1}$ is an integer 
lower-triangular
matrix with $(D\sp{-1})_{ij}=1$ for $i \geq j$ and 
$(D\sp{-1})_{ij}=0$ for $i < j$.

\begin{proposition}[\cite{MM19multm,Mmult05}]  \label{PRmultmsetLnatset}
A set $S  \subseteq \ZZ\sp{n}$ is multimodular 
if and only if 
it can be represented as  $S = \{ D y \mid y \in T \}$
for some \Lnat-convex set $T$,
where $T$ is uniquely determined from $S$ as 
$T = \{ D\sp{-1} x \mid x \in S \}$.
\finboxARX
\end{proposition}

In accordance with this relation, 
we call a polyhedron $P$
a {\em multimodular polyhedron}
if it can be represented as $P = \{ D y \mid y \in Q \}$
for some \Lnat-convex polyhedron $Q$.
Such $Q$ is uniquely determined from $P$ as $Q = \{ D\sp{-1} x \mid x \in P \}$.

Multimodular sets and polyhedra can be described by inequalities as follows.
A subset $I$ of the index set $N = \{ 1,2,\ldots, n \}$
is said to be {\em consecutive}
if it consists of consecutive numbers, that is,
it is a set of the form 
$I = \{ k, k+1, \ldots, \ell -1, \ell \}$
for some $k \leq \ell$.

\begin{theorem}[\cite{MM21inclinter}] \label{THmmpolydesc}
Let $N = \{ 1,2,\ldots, n  \}$.

\noindent
{\rm (1)}
A nonempty set $P \subseteq \mathbb{R}\sp{n}$ is a multimodular polyhedron
if and only if it can be represented as
$  P =   \{ x \in \RR\sp{n} \mid 
 a_{I} \leq x(I) \leq b_{I}  \ \ (\mbox{\rm $I$: consecutive subset of $N$})   \}$ 
for some $a_{I} \in \RR \cup \{ -\infty \}$ and $b_{I} \in \RR \cup \{ +\infty \}$
indexed by consecutive subsets $I$ of $N$.

\noindent
{\rm (2)}
A nonempty set $S \subseteq \mathbb{Z}\sp{n}$ is a multimodular set
if and only if it can be represented as
$ S =   \{ x \in \ZZ\sp{n} \mid 
 a_{I} \leq x(I) \leq b_{I}  \ \ (\mbox{\rm $I$: consecutive subset of $N$})   \}$
for some $a_{I} \in \ZZ \cup \{ -\infty \}$ and $b_{I} \in \ZZ \cup \{ +\infty \}$
indexed by consecutive subsets $I$ of $N$.
\finboxARX
\end{theorem}

The above theorem implies immediately that a box is multimodular,
which was pointed out first in \cite[Proposition~2]{MM19multm}.

\newpage
\tableofcontents

\end{document}